\newtheorem{thm}{Theorem}[section]
\newtheorem{cor}[thm]{Corollary}
\newtheorem{lem}[thm]{Lemma}
\newtheorem{prop}[thm]{Proposition}
\theoremstyle{definition}
\newtheorem{defn}[thm]{Definition}
\theoremstyle{remark}
\newtheorem{rem}[thm]{Remark}
\numberwithin{equation}{section}
\newcommand{\C}{\mathbb{C}}
\newcommand{\Z}{\mathbb{Z}}
\newcommand{\K}{\mathbb{K}}
\author{Abdenacer  Makhlouf \thanks{Universit\'e de Haute Alsace, LMIA,
4, rue des Fr\`eres Lumi\`ere
F-68093 Mulhouse, France
(\textbf{abdenacer.makhlouf@uha.fr})} \and  Nejib Saadaoui  \thanks{ Higher Institute of Computer Science Medenine, Av. Djorf Medenine km 3, PO Box 283, 4100 Medenine, Tunisia  (\textbf{nejib.saadaoui@fsg.rnu.tn})}}
\title{ Cohomology and Deformation  of  Virasoro Extensions of $q$-Witt  Hom-Lie superalgebra }
\begin{document}
\maketitle
\begin{abstract}
The purpose of this paper is to study Virasoro extensions of the $q$-deformed Witt Hom-Lie superalgebra. Moreover, we  provide the cohomology and  deformations of the Ramond Hom-superalgebra and special Ramond Hom-superalgebra. 

\end{abstract}
\section{Introduction}
Various important examples of Lie superalgebras have been constructed
starting from the Witt  algebra $\mathcal{W}$. It is well-known that $ \mathcal{W}$ (up to equivalence and rescaling) has a unique nontrivial one-dimensional central extension, the Virasoro algebra. This is not the case in the superalgebras case, very important examples are the Neveu-Schwarz and the
Ramond  superalgebras. For further generalizations, we refer to Schlichenmaier's book \cite{SchlichenBook}. The Neveu-Schwarz and Ramond superalgebras are usually called super-Virasoro algebras since they can be viewed as super-analogs of the Virasoro algebra.
  Their corresponding second cohomology groups are computed in \cite{Hijli}.  One may found the second cohomology group computation  of Witt and Virasoro algebras  in \cite{Alice,FialowskiRigidityWitt, Schlichen}. The $q$-deformed Witt superalgebra $\mathcal{W}_q$  was  defined in \cite{AmmarMakhloufJA2010} as a main example of Hom-Lie superalgebras. The cohomology  and  deformations of  $\mathcal{W}_q$ were studied in \cite{Saadaoui,SaadaouiN}. 
The first and second cohomology groups of the $q$-deformed Heisenberg-Virasoro algebra of Hom-type  are computed in \cite{Cheng}. 

In this paper, we aim to study extensions of Hom-Lie superalgebras and discuss mainly the case of  $\mathcal{W}_q$ Hom-superalgebra. We provide a characterization of the Virasoro extensions of the $q$-Witt superalgebra and study their cohomology and deformations. In   Section 2,  we review the basics about Hom-Lie superalgebras and their cohomology; and in Section 3, we discuss their extensions.
In Section $4$, we describe the $q$-Witt superalgebra extensions of Virasoro type, we introduce  Ramond Hom-superalgebra and special  Ramond Hom-superalgebra. Section $5$ is dedicated to  cohomology and derivations calculations of Virasoro extensions of $q$-Witt superalgebra. In the last section we discuss one-parameter formal  deformations  of  Ramond and special  Ramond Hom-Lie superalgebras.
\section{Preliminaries}
In this section,  we recall   definitions of Hom-Lie superalgebras, $q$-deformed Witt superalgebra and some basics about representations and cohomology. For more details we refer to 
 \cite{Saadaoui}.

\begin{defn}
A Hom-Lie superalgebra  is a triple $(\mathcal{G},\ [.,.],\ \alpha)$\ consisting of a superspace $\mathcal{G}$, an even bilinear map \ $\ [.,.]:\mathcal{G}\times \mathcal{G}\rightarrow \mathcal{G}$ \ and an even superspace homomorphism \ $ \alpha:\mathcal{G}\rightarrow \mathcal{G} \ $satisfying
\begin{eqnarray*}
&&[x,y]=-(-1)^{|x||y|}[y,x],\\
&&(-1)^{|x||z|}[\alpha(x),[y,z]]+(-1)^{|z||y|} [\alpha(z),[x,y]]+(-1)^{|y||x|} [\alpha(y),[z,x]]=0,\label{jacobie}
\end{eqnarray*}
for all homogeneous element $x, y, z$ in $\mathcal{G}$ and where $|x|$ denotes the degree of the homogeneous element $x$.
\end{defn}
 \subsection{A $q$-deformed Witt superalgebra}\label{witt}
A $q$-deformed Witt superalgebra  $\mathcal{W}^q $ can  be presented as the $\Z_2$-graded vector space
with $\{L_n\}_{n\in\Z}$ as a basis of the even homogeneous part and $\{G_n\}_{n\in\Z}$
 as a basis of the
odd homogeneous part. It is equipped with the commutator
\begin{align}&[L_{n},L_{m}]=(\{m\}-\{n\})L_{n+m}, \label{crochet1} \\
&[L_{n},G_{m}]=(\{m+1\}-\{n\})G_{n+m},\label{crochet2}
\end{align}
where $\{m\}$ denotes the $q$-number $m$, that is $\{m\}=\frac{1-q^m}{1-q}$.
 The other brackets are obtained by supersymmetry or are equal to $0$.  The even linear map
$\alpha$  on $\mathcal{W}^{q}$ is  defined on the generators by
    $$ \alpha(L_{n})=(1+q^{n})L_{n}, \ 
\alpha(G_{n})=(1+q^{n+1})G_{n}.$$ For more details, we refer to \cite{AmmarMakhloufJA2010}.
\subsection{Cohomology of Hom-Lie superalgebras}
Let $(\mathcal{G},[.,.],\alpha)$ be a Hom-Lie superalgebra and $V=V_{0}\oplus V_{1}$ be  an arbitrary vector superspace. Let $\beta\in\mathcal{G}l(V)$ be an arbitrary even linear self-map on $V$  and
\begin{eqnarray*}
[.,.]_{V}&:&\mathcal{G}\times V \rightarrow V \\
&&(g,v)\mapsto [g,v]_{V}
\end{eqnarray*}
 a bilinear map  satisfying $[\mathcal{G}_{i},V_{j}]_{V}\subset V_{i+j}$ where $i,j\in \mathbb{Z}_{2}.$
\begin{defn}
The triple $(V,[.,.]_{V}, \beta)$  is called a representation of the Hom-Lie superalgebra $\mathcal{G}=\mathcal{G}_{0}\oplus \mathcal{G}_{1}$ or  $\mathcal{G}$-module $V$  if the  even bilinear map $[.,.]_{V}$ satisfies, for $x,y\in \mathcal{G}$ and $v\in V$, 
\begin{eqnarray}
\left[[x,y],\beta(v)\right]_{V}&=&\left[\alpha(x),[y,v]\right]_{V}-(-1)^{|x||y|}\left[\alpha(y),[x,v]\right]_{V} .\label{rep2}
\label{mod}
\end{eqnarray}
\end{defn}

\begin{rem}
When $[.,.]_{V}$ is the zero-map, we say that the module $V$ is trivial.
\end{rem}
\begin{defn}\cite{Saadaoui}
The set $C^{k}(\mathcal{G},V)$ of $k$-cochains on space $\mathcal{G}$ with values in $V,$ is the set of $k$-linear maps $f:\otimes^{k}\mathcal{G}\rightarrow V$
satisfying
$$f(x_{1},\dots,x_{i},x_{i+1},\dots,x_{k})=-(-1)^{|x_{i}||x_{i+1}|}f(x_{1},\dots,x_{i+1},x_{i},\dots,x_{k})\ \textrm{ for } 1\leq i\leq k-1 .$$
For $k=0$ we have $C^{0}(\mathcal{G},V)=V.$\\
Define $\delta^{k}:C^{k}(\mathcal{G},\ V)\rightarrow C^{k+1}(\mathcal{G},\ V)$ by setting
\begin{align}
&\delta^{k}(f)(x_{0},\dots,x_{k})=\nonumber \\&\sum_{0\leq s < t\leq k}(-1)^{t+|x_{t}|(|x_{s+1}|+\dots+|x_{t-1}|)}
f\Big(\alpha(x_{0}),\dots,\alpha(x_{s-1}),[x_{s},x_{t}],\alpha(x_{s+1}),\dots,\widehat{x_{t}},\dots,\alpha(x_{k})\Big) \nonumber \\
&+\sum_{s=0}^{k}(-1)^{s+|x_{s}|(|f|+|x_{0}|+\dots+|x_{s-1}|)}\Bigg[\alpha^{k+r-1}(x_{s}), f\Big(x_{0},\dots,\widehat{x_{s}},\dots,x_{k}\Big)\Bigg]_{V},\label{def cobbb}
\end{align}
 where $f\in C^{k}(\mathcal{G},\ V)$,  $|f|$ is the parity of $f$, $\ x_{0},....,x_{k}\in \mathcal{G}$ and $\ \widehat{x_{i}}\ $  means that $x_{i}$ is omitted.\\
 \end{defn}
We assume that the representation $(V,[.,.]_{V},\beta)$ of a Hom-Lie superalgebra $(\mathcal{G},[.,.],\alpha)$ is trivial.
Since $ [.,.] _V = $ 0, the operator defined in \eqref{def cobbb} becomes

\begin{align}\label{Def cob}
& \delta^{k}_T(f)(x_{0},\dots,x_{k}) =\\& \sum_{0\leq s < t\leq k}(-1)^{t+|x_{t}|(|x_{s+1}|+\dots+|x_{t-1}|)} 
f\Big(\alpha(x_{0}),\dots,\alpha(x_{s-1}),[x_{s},x_{t}],\alpha(x_{s+1}),\dots,\widehat{x_{t}},\dots,\alpha(x_{k})\Big). \nonumber
\end{align}


The pair $(\oplus_{k>0}C^{k}_{\alpha ,\beta}(\mathcal{G},\ V),\{\delta^{k}\}_{k>0})$ defines a chomology complex, that is
 $\delta^{k} \circ \delta^{k-1}=0.$
\begin{itemize}
\item The $k$-cocycles space is defined as $Z^{k}(\mathcal{G})=\ker \ \delta^{k}.$
 \item  The $k$-coboundary space is defined as    $B^{k}(\mathcal{G})=Im \ \delta^{k-1}.$
  \item The $k^{th}$ cohomology  space is the quotient  $H^{k}(\mathcal{G})=Z^{k}(\mathcal{G})/ B^{k}(\mathcal{G}). $ It decomposes as well as even and odd $k^{th}$ cohomology spaces.
 \end{itemize}

Now,  we consider the adjoint representation of a Hom-superalgebra and define the first and second coboundary maps. For all $f\in C^{1}_{\alpha,\alpha}(\mathcal{G},\mathcal{G})=\{g\in C^{1}(\mathcal{G},\mathcal{G}); g\circ \alpha =\alpha \circ g\}$
 the operator defined in \eqref{def cobbb} $(r=0, \ k\in \{1,2\})$ becomes
\begin{eqnarray}\label{adjoint1}
\delta^{1}_{\mathcal{G}}(f)(x,y)&=&-f([x,y])+(-1)^{|x||f|}[x,f(y)]-(-1)^{|y|(|f|+|x|}[y,f(x)]
\end{eqnarray}
\begin{eqnarray}\label{adjoint2}
\delta^{2}_{\mathcal{G}}(f)(x,y,z)&=&-f([x,y],\alpha(z))+(-1)^{|z||y|}f([x,z],\alpha (y))+f(\alpha (x),[y,z])\nonumber\\
&&
+(-1)^{|x||f|}[\alpha(x),f(y,z)]
-(-1)^{|y|(|f|+|x|}[\alpha(y),f(x,z)]\\
&&+(-1)^{|z|(|f|+|x|+|y|}[\alpha(z),f(x,y)]\nonumber
\end{eqnarray}
Then we have
$$\delta^{2}_{\mathcal{G}}\circ \delta^{1}_{\mathcal{G}}(f)=0, \qquad \forall f\in C^{1}_{\alpha,\alpha}(\mathcal{G},\mathcal{G}).$$
We denote by $H^{1}(\mathcal{G},\mathcal{G})$ (resp. $H^{2}(\mathcal{G},\mathcal{G})$) the corresponding 1st and 2nd cohomology groups.
An element $f$ of $Z^1(\mathcal{G},\mathcal{G})$ is called derivation of $\mathcal{G}$.
\section{Extensions of   Hom-Lie superalgebras}\label{extensions}
An extension of a Hom-Lie superalgebra $(\mathcal{G},[.,.],\alpha)$  by a  representation $(V,[.,.]_V,\beta )$   is an exact sequence
$$0\longrightarrow (V,\beta)\stackrel{i}{\longrightarrow} (K,\gamma)\stackrel{\pi}{\longrightarrow }(\mathcal{G},\alpha) \longrightarrow 0 $$
 satisfying $\gamma\ o\  i=i\ o\ \beta $ and $\alpha \ o \ \pi = \pi\  o \ \gamma.$\\
   This extension is said to be   central if $[K,i(V)]_{K}=0.$
   
 In particular, if $K=\mathcal{G}\oplus V$, $i(v)=v, \ \forall v\in V$ and $\pi(x)=x, \ \forall x\in \mathcal{G}$, then we have $\gamma(x,v)=(\alpha(x),\beta(v))$ and we denote
 $$0\longrightarrow (V,\beta)\longrightarrow (K,\gamma)\longrightarrow (\mathcal{G},\alpha) \longrightarrow 0 .$$
For convenience, we introduce the following notation for certain cochains  spaces on $K = \mathcal{G}\oplus V$, 
 $\mathcal{C}(\mathcal{G}^n,\mathcal{G})$ and 
$ \mathcal{C}(\mathcal{G}^kV^l,V),$ 
where $\mathcal{G}^kV^l$  is the subspace of $K^{k+l}$ determined by products of $k$
elements from $\mathcal{G}$ and $l$ elements from $V$.

Let $(\phi,\psi)\in \mathcal{C}^2(K,K)\times \mathcal{C}^2(K,K)$. We set, for all $X,Y,Z\in K_{0}\cup K_{1}$,
\begin{align*}
&\phi\circ\psi(X,Y,Z)=(-1)^{|X||Z|}\circlearrowleft_{X,Y,Z}(-1)^{|X|(|\psi|+|Z|)}\phi(\gamma(X),\psi(Y,Z)), 
\end{align*}
and
\begin{align*}
[\phi,\psi]&=\phi\circ\psi+(-1)^{|\psi||\phi|}\psi\circ\phi.\\
\end{align*}
For $f\in \mathcal{C}^{2}(K,K)$, we set  $f=\widetilde{f}+\widehat{f}+\overline{f}+v+\widehat{v}+\overline{v}$ where $\widetilde{f}\in \mathcal{C}( \mathcal{G}^{2}, \mathcal{G})$, 
 $\widehat{f}\in \mathcal{C}( \mathcal{G} V, \mathcal{G})$,  $\overline{f}\in \mathcal{C}( V^{2}, \mathcal{G})$ ,  $v\in \mathcal{C}(\mathcal{G}^2,V)$, ,  $\widehat{v}\in \mathcal{C}( \mathcal{G} V,V )$ and  $\overline{v}\in \mathcal{C}(  V^2,V )$. \\ Let $d$ $\in \mathcal{C}^2(K,K),$
if $(K,d,\gamma)$ is a Hom-Lie superalgebra and $V$ is an ideal in $K$ (i.e $d(\mathcal{G},V)\subset V$ ), we obtain by using the above notation :
\begin{center}
\begin{itemize}
 \item $\widehat{d}\equiv0$,
   \item $\overline{d}\equiv0$,
 \item $0=[d,d](x,y,z)=\Big([\widetilde{d},\widetilde{d}]+2  [v,\widetilde{d}]+2  [\widehat{v},v]\Big)(x,y,z)$,\qquad $\forall x,y,z\in \mathcal{G}$,\\
 $[\widetilde{d},\widetilde{d}](x,y,z)\in \mathcal{G}$ and  $\Big(2  [v,\widetilde{d}]+2  [\widehat{v},v]\Big)(x,y,z)\in V,$
 \item $0=[d,d](x,y,w)=\Big([\widehat{v},\widehat{v}]+2[\widetilde{d},\widehat{v}]+2[\overline{v},v]\Big)(x,y,w)$,\quad $\forall (x,y,w)\in \mathcal{G}^2\times V,$
  \item   $0=[d,d](x,v,w)=2[\overline{v},\widehat{v}](x,v,w)$,\qquad $\forall (x,v,w)\in \mathcal{G}\times V^2,$
 \item $0=[d,d](u,v,w)=\frac{1}{2}[\overline{v},\overline{v}](u,v,w)$,\qquad $\forall (u,v,w)\in  V^3$.
\end{itemize}
\end{center}
We deduce the following theorem
\begin{thm}
The triple $(K,d,\gamma)$ is a Hom-Lie superalgebra if and only  if the following conditions are satisfied
\begin{center}
\begin{itemize}
\item $(\mathcal{G},\widetilde{d},\alpha)$ is a Hom-Lie superalgebra,
\item  $[v,\widetilde{d}]+  [\widehat{v},v]\equiv0$,
  \item  $\frac{1}{2}[\widehat{v},\widehat{v}]+[\widetilde{d},\widehat{v}]+[\overline{v},v]\equiv0$,
    \item   $[\overline{v},\widehat{v}]\equiv0$,
    \item $(V,\overline{v},\beta)$ is a Hom-Lie superalgebra.
\end{itemize}
\end{center}
\end{thm}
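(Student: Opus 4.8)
The plan is to reduce the statement to the single identity $[d,d]\equiv0$ and then read off its graded components. Recall that $(K,d,\gamma)$ is a Hom-Lie superalgebra precisely when $d$ is super-skew-symmetric and satisfies the Hom-super-Jacobi identity. Since $d\in\mathcal{C}^2(K,K)$ is super-skew by construction, and $\gamma$ is the prescribed even morphism $\gamma(x,v)=(\alpha(x),\beta(v))$, the only remaining requirement is the Jacobi identity, which in the notation introduced above is exactly $[d,d]\equiv0$ (here $[d,d]=d\circ d+(-1)^{|d||d|}d\circ d=2\,d\circ d$, as $d$ is even). So the whole argument amounts to showing that, under the ideal hypothesis, $[d,d]\equiv0$ is equivalent to the five listed conditions.

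First I would record that the ideal hypothesis $d(\mathcal{G},V)\subset V$, together with $d(V,V)\subset V$ which it entails, forces the two $\mathcal{G}$-valued mixed components to vanish: $\widehat{d}\equiv0$ and $\overline{d}\equiv0$. This is precisely what makes the decomposition collapse in the mixed sectors. With these vanishings, $d$ reduces to $\widetilde{d}+v+\widehat{v}+\overline{v}$, and every composite appearing in $[d,d]$ can be tracked by the type of its inputs and of its output.

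Next I would evaluate the $3$-cochain $[d,d]$ sector by sector, according to how many arguments lie in $\mathcal{G}$ and how many in $V$, and according to whether the output lies in $\mathcal{G}$ or in $V$; these are exactly the four displayed computations preceding the statement. On $\mathcal{G}^3$ the output splits into a $\mathcal{G}$-part $[\widetilde{d},\widetilde{d}]$ and a $V$-part $2[v,\widetilde{d}]+2[\widehat{v},v]$: vanishing of the former is the Hom-Jacobi identity for $\widetilde{d}$, which with super-skew-symmetry yields that $(\mathcal{G},\widetilde{d},\alpha)$ is a Hom-Lie superalgebra, while vanishing of the latter gives $[v,\widetilde{d}]+[\widehat{v},v]\equiv0$. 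On $\mathcal{G}^2V$ the relations $\widehat{d}\equiv\overline{d}\equiv0$ force every term into $V$, and the equation reads $\frac{1}{2}[\widehat{v},\widehat{v}]+[\widetilde{d},\widehat{v}]+[\overline{v},v]\equiv0$. On $\mathcal{G}V^2$ one gets $[\overline{v},\widehat{v}]\equiv0$, and on $V^3$ the sole surviving term is $\frac{1}{2}[\overline{v},\overline{v}]\equiv0$, whose vanishing is the Hom-Jacobi identity for $\overline{v}$, making $(V,\overline{v},\beta)$ a Hom-Lie superalgebra. Collecting these equations yields the five stated conditions, and running the equivalences backwards, reassembling the sectors into $[d,d]$, establishes the converse.

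The main obstacle I anticipate is the bookkeeping in the super setting: keeping the Koszul signs consistent when expanding $[d,d]$ on homogeneous triples, and verifying that in the sectors $\mathcal{G}^2V$, $\mathcal{G}V^2$ and $V^3$ there is genuinely no $\mathcal{G}$-valued obstruction. This last point rests entirely on $\widehat{d}\equiv\overline{d}\equiv0$: any composite $d(\gamma(X),d(Y,Z))$ carrying at least one $V$-argument feeds a $V$-valued inner bracket into the outer $d$, which then lands in $V$ again. Once this closure is checked, separating the $\mathcal{G}$- and $V$-valued parts of each sector is mechanical and the five conditions drop out.
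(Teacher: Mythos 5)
Your proposal is correct and takes essentially the same route as the paper: the paper also reduces the statement to $[d,d]\equiv0$, uses the ideal hypothesis to force $\widehat{d}\equiv0$ and $\overline{d}\equiv0$, and then reads off the five conditions by splitting $[d,d]$ over the sectors $\mathcal{G}^3$, $\mathcal{G}^2V$, $\mathcal{G}V^2$, $V^3$ into their $\mathcal{G}$-valued and $V$-valued parts. Your sector-by-sector bookkeeping (including the factor $\tfrac{1}{2}$ normalizations) matches the paper's displayed computations exactly.
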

\begin{cor}
If $\overline{v}\equiv0$, then 
the triple $(K,d,\gamma)$ is a Hom-Lie superalgebra if and only  if the following conditions are satisfied
\begin{itemize}
\item $(\mathcal{G},\widetilde{d},\alpha)$ is a Hom-Lie superalgebra,
\item  $(V,\widehat{v},\beta)$ is a representation of $\mathcal{G}$,
\item  $v$ is a $2$-cocycle on  $V$ (with the cohomology defined by  $(\mathcal{G},\widetilde{d},\alpha)$
and $(V,\widehat{v},\beta)$).
\end{itemize}
\end{cor}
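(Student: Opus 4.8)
The plan is to specialize the preceding Theorem to the case $\overline{v}\equiv 0$ and then translate the two surviving bracket identities into the language of representations and cohomology. First I would substitute $\overline{v}\equiv 0$ directly into the five conditions of the Theorem and discard the ones that become vacuous. The fourth condition $[\overline{v},\widehat{v}]\equiv 0$ holds automatically, being linear in $\overline{v}$; the fifth condition reduces to the assertion that $(V,0,\beta)$ is a Hom-Lie superalgebra, which is trivially true since the zero bracket satisfies both the super-antisymmetry and the Hom-super-Jacobi identity; and the term $[\overline{v},v]$ drops out of the third condition. Thus only three requirements remain: that $(\mathcal{G},\widetilde{d},\alpha)$ be a Hom-Lie superalgebra, that $[v,\widetilde{d}]+[\widehat{v},v]\equiv 0$, and that $\tfrac12[\widehat{v},\widehat{v}]+[\widetilde{d},\widehat{v}]\equiv 0$. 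I would also record at the outset that the bracket $d$ of a Hom-Lie superalgebra is even, so each component $\widetilde{d},\widehat{v},v$ is an even map; this considerably tames the Koszul signs in what follows and, in particular, gives $\tfrac12[\widehat{v},\widehat{v}]=\widehat{v}\circ\widehat{v}$.

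Next I would identify the condition $\tfrac12[\widehat{v},\widehat{v}]+[\widetilde{d},\widehat{v}]\equiv 0$ with the statement that $(V,\widehat{v},\beta)$ is a representation of $\mathcal{G}$. Evaluating on arguments $(x,y,w)\in\mathcal{G}\times\mathcal{G}\times V$ and expanding the circle product $\phi\circ\psi$ and the graded bracket from their definitions, most cyclic terms vanish because $\widetilde{d}$ annihilates any $V$-slot and $\widehat{v}$ annihilates any pair of $\mathcal{G}$-slots. What survives from $[\widetilde{d},\widehat{v}]$ is the single term $\widehat{v}([x,y],\beta(w))$, while $\widehat{v}\circ\widehat{v}$ contributes exactly $\widehat{v}(\alpha(x),\widehat{v}(y,w))$ and its partner $\widehat{v}(\alpha(y),\widehat{v}(x,w))$. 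Reading $\widehat{v}$ as the action $[.,.]_V$, these assemble precisely into the module axiom \eqref{rep2}, so the third condition is equivalent to $(V,\widehat{v},\beta)$ being a representation.

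Then I would identify $[v,\widetilde{d}]+[\widehat{v},v]\equiv 0$ with the $2$-cocycle condition for $v$. Evaluating on $(x,y,z)\in\mathcal{G}^3$ and expanding, the compositions $\widetilde{d}\circ v$ and $v\circ\widehat{v}$ vanish (they would feed a $V$-value into a slot reserved for $\mathcal{G}$), so $[v,\widetilde{d}]$ reduces to $v\circ\widetilde{d}$ and $[\widehat{v},v]$ to $\widehat{v}\circ v$. The first produces, over the cyclic sum, exactly the Chevalley-type terms $v([x,y],\alpha(z))$ of the differential \eqref{def cobbb} applied to $v$, while the second produces the action terms $\widehat{v}(\alpha(x),v(y,z))=[\alpha(x),v(y,z)]_V$ and their cyclic partners. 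Hence $[v,\widetilde{d}]+[\widehat{v},v]$ equals $\delta^2 v$ up to an overall sign, for the differential built from $(\mathcal{G},\widetilde{d},\alpha)$ and the module $(V,\widehat{v},\beta)$, and the second condition becomes $\delta^2 v=0$, that is, $v$ is a $2$-cocycle. Combining the three surviving conditions then yields the Corollary.

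The main obstacle, and essentially the only real work, is the sign bookkeeping in these two expansions: one must verify that the Koszul signs generated by the circle product match, term by term, those in the module axiom \eqref{rep2} and in the coboundary operator \eqref{def cobbb}. Since $d$ is even, the parities $|\widetilde{d}|=|\widehat{v}|=|v|=0$ remove the self-interaction signs, and the only signs to track are those coming from the degrees of $x,y,z,w$ themselves; once these are carried through the super-antisymmetry used to bring each surviving term into the canonical form of \eqref{rep2} and \eqref{def cobbb}, the two identifications are immediate.
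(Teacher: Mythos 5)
Your proposal is correct and follows exactly the route the paper intends: the corollary is stated as an immediate specialization of the preceding theorem (the paper gives no separate proof), obtained by setting $\overline{v}\equiv0$, discarding the then-vacuous fourth and fifth conditions, and recognizing $\tfrac12[\widehat{v},\widehat{v}]+[\widetilde{d},\widehat{v}]\equiv0$ as the module axiom \eqref{rep2} for $(V,\widehat{v},\beta)$ and $[v,\widetilde{d}]+[\widehat{v},v]\equiv0$ as the cocycle condition $\delta^2 v=0$. Your write-up actually supplies more detail (the term-by-term survival analysis and the evenness of $d$ killing the self-interaction signs) than the paper records.
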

\begin{cor}
Let $$0\longrightarrow (V,\beta){\longrightarrow} (\mathcal{G}\oplus V,\widetilde{\alpha}){\longrightarrow }(\mathcal{G},\alpha) \longrightarrow 0 $$
 be an extension of $(\mathcal{G},[., .], \alpha)$ by   a  representation $(V,[.,.]_V,\beta )$, where $\widetilde{\alpha}$ is defined by $\widetilde{\alpha}(x,v)=(\alpha(x),\beta(v)$, for all $x\in \mathcal{G}$ and $v\in V$. 
 
 Let $\varphi \in (C^{2}(\mathcal{G},V))_{j},\ (j\in \Z_2)$. We define a skew-symmetric bilinear bracket operation $d:\wedge^2(\mathcal{G}\oplus V)\rightarrow \mathcal{G}\oplus V $ by
\begin{equation}\label{crochetViraP4}
  d\Big((x,u);(y,v)\Big)=\Big([x,y],[x,v]_{V}-(-1)^{|u||y|}[y,u]_{V}+\varphi(x,y)\Big)\ \ \ \forall x,y \in \mathcal{G} \     v, w\in V.
 \end{equation}
The triple  $(\mathcal{G}\oplus V,d,\widetilde{\alpha})$   is a Hom-Lie superalgebra if and only if $\varphi$ is a  $2$-cocycle (i.e. $\varphi\in Z^2(\mathcal{G},V)$).
\end{cor}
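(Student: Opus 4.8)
The plan is to obtain this statement as an immediate consequence of the preceding Corollary (the case $\overline{v}\equiv 0$), by recognizing the bracket $d$ of \eqref{crochetViraP4} as a particular instance of the general decomposition $d=\widetilde{d}+\widehat{d}+\overline{d}+v+\widehat{v}+\overline{v}$ set up before the Theorem. The first step is to read off each component of $d$ by restricting \eqref{crochetViraP4} to the three types of argument pairs, namely both arguments in $\mathcal{G}$, one in $\mathcal{G}$ and one in $V$, and both in $V$.

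Evaluating on a pair from $\mathcal{G}$ gives $d\big((x,0);(y,0)\big)=\big([x,y],\varphi(x,y)\big)$, so that $\widetilde{d}(x,y)=[x,y]$ and $v(x,y)=\varphi(x,y)$; on a mixed pair it gives $d\big((x,0);(0,v)\big)=\big(0,[x,v]_V\big)$, so that $\widehat{d}\equiv 0$ and $\widehat{v}(x,v)=[x,v]_V$; and on a pair from $V$ it gives $d\big((0,u);(0,v)\big)=(0,0)$, so that $\overline{d}\equiv 0$ and $\overline{v}\equiv 0$. Thus the bracket under study has $\overline{v}\equiv 0$, which is exactly the hypothesis of the preceding Corollary, and the $\mathcal{G}$-valued components $\widehat{d},\overline{d}$ vanish as well, so that $V$ is an ideal of $(\mathcal{G}\oplus V,d)$.

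It then remains only to check the three conditions of that Corollary. Its first condition, that $(\mathcal{G},\widetilde{d},\alpha)=(\mathcal{G},[.,.],\alpha)$ be a Hom-Lie superalgebra, holds by hypothesis; its second, that $(V,\widehat{v},\beta)=(V,[.,.]_V,\beta)$ be a representation of $\mathcal{G}$, is precisely the assumption that $V$ is a $\mathcal{G}$-module. Consequently the Corollary reduces the Hom-Lie superalgebra condition on $(\mathcal{G}\oplus V,d,\widetilde{\alpha})$ to the single requirement that $v=\varphi$ be a $2$-cocycle for the cohomology attached to $(\mathcal{G},[.,.],\alpha)$ and $(V,[.,.]_V,\beta)$, that is $\varphi\in Z^2(\mathcal{G},V)$, delivering both implications simultaneously.

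The step I expect to require the most care, and hence the main obstacle, is making the component identification fully rigorous with respect to the super-sign conventions: one has to confirm that the antisymmetric expression $[x,v]_V-(-1)^{|u||y|}[y,u]_V$ appearing in \eqref{crochetViraP4} is genuinely the skew-symmetrization of the chosen $\widehat{v}(x,v)=[x,v]_V$ over the pair $\big((x,u);(y,v)\big)$, and that the notion of $2$-cocycle invoked by the Corollary coincides with $\ker\delta^{2}$ for the coboundary $\delta^{2}$ of \eqref{def cobbb} specialized to the module $V$, so that the phrase ``$v$ is a $2$-cocycle'' is literally the condition $\varphi\in Z^2(\mathcal{G},V)$. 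Once this bookkeeping is settled no further computation is needed, because the graded splitting of the Hom-Jacobi identity for $d$ has already been carried out in establishing the Theorem and its Corollary.
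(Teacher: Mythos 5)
Your proposal is correct and follows essentially the same route as the paper: the paper's proof likewise identifies the components $\widetilde{d}=[.,.]$, $v=\varphi$, $\widehat{v}=[.,.]_{V}$, $\overline{v}\equiv 0$ of the bracket \eqref{crochetViraP4} and then invokes the preceding Corollary (the case $\overline{v}\equiv 0$) to reduce the Hom-Lie superalgebra condition to $\varphi\in Z^{2}(\mathcal{G},V)$. Your write-up is in fact more careful than the paper's, which states the component identification without the case-by-case evaluation you perform.
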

\begin{proof}
We have 
$\overline{v}\equiv0$, $\widetilde{d}=[.;.] $,
 $v=\varphi $,
 $\widehat{v}=[.,.]_{V}$ and 
 $\overline{v}\equiv0$. Then, we deduce
\begin{itemize}
\item $(\mathcal{G},\widetilde{d},\alpha)$ is a Hom-Lie superalgebra
\item  $(V,\widehat{v},\beta)$ is a representation of $\mathcal{G}$.
\end{itemize}
Therefore, the triple  $(\mathcal{G}\oplus V,d,\gamma)$   is a Hom-Lie superalgebra if and only if $\varphi$ is a  $2$-cocycle (i.e. $f\in Z^2(\mathcal{G},V)$).
\end{proof}

\begin{rem}
\begin{itemize}
 \item If $\varphi$ is even, then $\mathcal{G}_0\oplus V_0$ is an even homogeneous part and $\mathcal{G}_1\oplus V_1$ is the odd
 homogeneous part of $\mathcal{G}\oplus V$.
  \item  If $\varphi$ is odd, then, $\mathcal{G}_0\oplus V_1$ is an even homogeneous part and $\mathcal{G}_1\oplus V_0$ is the odd
 homogeneous part of $\mathcal{G}\oplus V$.  The Hom-Lie superalgebra  $\mathcal{G}\oplus V$ is called the special extension of $\mathcal{G}$ by $V$.
\end{itemize}
\end{rem}
\begin{thm}
Let $(V,[.,.]_V,\beta)$  be a representation of  a Hom-Lie superalgebra $(\mathcal{G}, [. , . ], \alpha)$.
 The even second cohomology space $H_{0}^2(\mathcal{G}, V ) = Z_0^2(\mathcal{G}, V )/B_0^2(\mathcal{G}, V )$
is in one-to-one correspondence with the set of the equivalence classes
extensions of $(\mathcal{G}, [., .], \alpha)$ by $(V,[.,.]_V,\beta)$ .
\end{thm}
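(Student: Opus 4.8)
The plan is to exhibit an explicit map $\Phi$ from even $2$-cocycles to extensions, to show that it is compatible with the two equivalence relations (coboundaries on one side, equivalence of extensions on the other), and that the induced map on classes is a bijection. The engine throughout is the Corollary attached to the bracket \eqref{crochetViraP4}: for an even $\varphi$, the formula \eqref{crochetViraP4} endows $K_\varphi := \mathcal{G}\oplus V$ with a bracket $d_\varphi$ which, together with $\widetilde{\alpha}(x,v)=(\alpha(x),\beta(v))$, is a Hom-Lie superalgebra \emph{if and only if} $\varphi\in Z^2(\mathcal{G},V)$; moreover the obvious maps $i(v)=(0,v)$ and $\pi(x,v)=x$ then form an extension $0\to(V,\beta)\to(K_\varphi,\widetilde{\alpha})\to(\mathcal{G},\alpha)\to0$. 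Requiring $\varphi$ even guarantees that $d_\varphi$ is even and that the $\Z_2$-grading of $K_\varphi$ is the standard one, which is why only $H_0^2$ appears. I set $\Phi(\varphi)$ to be the equivalence class of this extension.

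For surjectivity, I would start from an arbitrary extension with the normalized data $K=\mathcal{G}\oplus V$, $\gamma=\widetilde{\alpha}$, $i$, $\pi$ as above, and take the even vector-space section $s(x)=(x,0)$. Because $\pi$ is a morphism, the element $d(s(x),s(y))-s([x,y])$ lies in $\ker\pi=V$; call it $\varphi(x,y)$. Comparing with the module action recovered from the mixed bracket $d(\mathcal{G},V)\subset V$, the bracket of $K$ takes exactly the shape \eqref{crochetViraP4}, so $K$ coincides with $K_\varphi$. The Hom-super-Jacobi identity for $d$, written on triples from $\mathcal{G}$ and read modulo the already-known identities for $[.,.]$ and $[.,.]_V$, is precisely the cocycle condition $\delta^2\varphi=0$; evenness of $s$ and of $d$ forces $\varphi$ even. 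Hence every extension is $\Phi(\varphi)$ for some $\varphi\in Z_0^2(\mathcal{G},V)$.

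It remains to show that $\Phi$ descends to an injection on $H_0^2$, that is, $\Phi(\varphi)=\Phi(\varphi')$ as extensions $\iff \varphi-\varphi'\in B_0^2(\mathcal{G},V)$. Any equivalence $\theta:K_\varphi\to K_{\varphi'}$ commutes with $i$ and $\pi$, so it is necessarily triangular: $\theta(x,v)=(x,\,v+g(x))$ for a unique even $g\in C^1(\mathcal{G},V)$. The compatibility $\theta\circ\widetilde{\alpha}=\widetilde{\alpha}\circ\theta$ forces $g\circ\alpha=\beta\circ g$, and then the requirement that $\theta$ intertwine $d_\varphi$ and $d_{\varphi'}$ collapses, after cancelling the $[.,.]$- and $[.,.]_V$-terms, to the single identity $\varphi'-\varphi=\delta^1 g$. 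Running this computation backwards shows that a coboundary $\delta^1 g$ (with $g$ compatible) yields an equivalence via the same $\theta$. I expect the main obstacle to be exactly this bookkeeping: verifying that the triangular $\theta$ is a genuine Hom-Lie superalgebra morphism requires the structure-map constraint $g\circ\alpha=\beta\circ g$, so one must check that $B_0^2$ is computed from the $\alpha,\beta$-compatible $1$-cochains. This is the point where the Hom-deformation of the structure maps, rather than the classical Lie-superalgebra argument, genuinely enters, and where parity must be tracked carefully in order to remain inside the even spaces $Z_0^2$, $B_0^2$.
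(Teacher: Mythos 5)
Your proof is correct, and it rests on the same two devices as the paper's own proof: the Corollary stating that the bracket \eqref{crochetViraP4} makes $\mathcal{G}\oplus V$ into a Hom-Lie superalgebra exactly when $\varphi\in Z^2(\mathcal{G},V)$, and the triangular map $(x,v)\mapsto(x,v\pm g(x))$ implementing equivalences. The difference is one of completeness rather than of route: the paper's proof establishes only one of the needed implications, namely that if $\varphi-\varphi'=\delta^1 h$ with $h\circ\alpha=\beta\circ h$, then $\Phi(x,v)=(x,v-h(x))$ is an equivalence of the two extensions; it neither proves the converse (that an equivalence of extensions forces $\varphi-\varphi'\in B_0^2(\mathcal{G},V)$) nor makes the surjectivity argument explicit (this is implicitly delegated to the Corollary preceding the theorem). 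Your steps supply precisely these missing pieces: the observation that any equivalence commuting with $i$ and $\pi$ must be triangular, $\theta(x,v)=(x,v+g(x))$; the fact that $\theta\circ\widetilde{\alpha}=\widetilde{\alpha}\circ\theta$ forces $g\circ\alpha=\beta\circ g$, so that coboundaries must indeed be computed from structure-map-compatible $1$-cochains, as you correctly flag; and the section computation $\varphi(x,y)=d(s(x),s(y))-s([x,y])\in\ker\pi=V$ together with the identification of the mixed bracket with the given action $[.,.]_V$. One caveat you share with the paper: both arguments take \emph{extension} to mean the normalized data $K=\mathcal{G}\oplus V$, $\gamma=\widetilde{\alpha}$, as in the paper's definition at the start of Section \ref{extensions}; for an extension presented as an abstract exact sequence one would additionally need an even section $s$ satisfying $\gamma\circ s=s\circ\alpha$ in order to reduce to this normal form, and such a section need not exist in the Hom setting. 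So your surjectivity step is correct relative to the paper's definition, but would require this extra hypothesis in full generality.
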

\begin{proof}
Let $(\mathcal{G}\oplus V,d,\gamma)$ and $(\mathcal{G}\oplus V,d',\gamma)$  be  two extensions of $(\mathcal{G}, [., .], \alpha)$. So there are two even cocycles $\varphi$ and $\varphi'$ such as $ d\Big((x,u);(y,v)\Big)=\Big([x,y],[x,v]_{V}+[u,y]_{V}+\varphi(x,y)\Big)$ and $ d'\Big((x,u);(y,v)\Big)=\Big([x,y],[x,v]_{V}+[u,y]_{V}+\varphi'(x,y)\Big)$.\\
If  $ \varphi-\varphi'=\delta^1 h(x,y)$, where $h:\mathcal{G}\rightarrow V$ is a linear map satisfying $h\circ\alpha=\beta    \circ h$ (i.e $\varphi-\varphi'\in B^2(\mathcal{G},V)$    ). Let us define $\Phi :(\mathcal{G}\oplus V,d,\gamma) \rightarrow           (\mathcal{G}\oplus V,d',\gamma)$ by
$\Phi(x,v) =(x,v-h(x))$. It is clear that $\Phi$ is bijective. Let us check that $\Phi$ is a   Hom-Lie superalgebras homomorphism. We have
\begin{eqnarray*}
 & &d(\Phi((x,v)),\Phi(((y,w)))\\
 &=&d((x,v-h(x)),(y,w-h(y)))\\
  &=&([x,y],  [x,w]_{V}+[v,y]_{V}-[x,h(y)]_{V}-[h(x),y]_{V}+\varphi(x,y))                          \\
    &=&([x,y],  [x,w]_{V}+[v,y]_{V}-\delta^1(h)(x,y)+\varphi(x,y)-h([x,y]))                          \\
  &=&\Phi(([x,y],f(x,y)))\\
      &=&([x,y],  [x,w]_{V}+[v,y]_{V}+\varphi'(x,y)-h([x,y]) )                         \\
         &=&\Phi([x,y],  [x,w]_{V}+[v,y]_{V}+\varphi'(x,y) )                         \\
  &=&\Phi(d'((x,v),(y,w)).
\end{eqnarray*}

\end{proof}

\begin{defn}
\begin{itemize}
 \item
If $\varphi\equiv0$, the  Hom-Lie superalgebra  $(\mathcal{G}\oplus V,d,\gamma)$ is called the semidirect
product of the    Hom-Lie superalgebra  $(\mathcal{G},[.,.],\alpha)$    and $(V,[.,.]_{V},\beta)$.
 \item If $V=c\C$ ($c\in \C$) and $[., .]_{V}\equiv0$,  the  Hom-Lie superalgebra  $(\mathcal{G}\oplus V,d,\gamma)$ is
called a
one-dimensional central extension of  $\mathcal{G}$.
 \item If $V=c\C$, $[., .]_{V}\equiv0$  and $\mathcal{G}=\mathcal{W}^q$,  the  Hom-Lie superalgebra  $(\mathcal{G}\oplus V,d,\gamma)$ is
called a Virasoro Hom-superalgebra.
 \item If $V=c\C$, $[.,.]_{V}\equiv0$, $\varphi\equiv0$  and $\mathcal{G}=\mathcal{W}^q$,  the  Hom-Lie superalgebra  $(\mathcal{G}\oplus V,d,\gamma)$ is
called a trivial Virasoro Hom-superalgebra.\\
\end{itemize}
\end{defn}
\begin{defn}
A  Hom-Lie superalgebra  $(\mathcal{G},[.,.],\alpha)$ is said to be $\mathbb{Z}$-graded if
   $\displaystyle\mathcal{G}=\oplus_{n\in\mathbb{Z}}\mathcal{G}_{n}$, 
 where $dim (\mathcal{G}_{n})<\infty$,
  $\alpha(\mathcal{G}_{n})\subset \mathcal{G}_{n}$ and $[\mathcal{G}_{n},\mathcal{G}_{m}]\subset \mathcal{G}_{n+m}$,
 for all $n,\  m \in \mathbb{Z}$. For an element $x\in \mathcal{G}$, we call $n$ the degree of $x$, denoted $deg(x)=n$,
if $x\in \mathcal{G}_{n}$.
\end{defn}
\begin{prop}
Let $(\mathcal{G},[.,.],\alpha)$ be  a $\mathbb{Z}$-graded  Hom-Lie superalgebra.
We denote  $((\mathcal{G}\oplus c\C)_{0}=\mathcal{G}_{0}\oplus \C$ and $(\mathcal{G}\oplus c\C)_{n}=\mathcal{G}_{n}, \ \forall n\in\Z^{*}$. If $\varphi(\mathcal{G}_{n},\mathcal{G}_{m})\subset\delta_{n+m,0}\mathcal{G}_{n+m}$ then 
 $(\mathcal{G}\oplus c\C,d,\gamma)$ is also $\mathbb{Z}$-graded.\\
\end{prop}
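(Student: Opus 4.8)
The plan is to verify directly the three defining conditions of a $\mathbb{Z}$-graded Hom-Lie superalgebra for $K=\mathcal{G}\oplus c\C$ equipped with the grading $K_{0}=\mathcal{G}_{0}\oplus\C$ and $K_{n}=\mathcal{G}_{n}$ for $n\in\Z^{*}$. Before starting I would record the shape of the data: since this is a one-dimensional central extension we have $V=c\C$ with $[.,.]_{V}\equiv0$, so the bracket \eqref{crochetViraP4} collapses to
\[
d\big((x,u);(y,v)\big)=\big([x,y],\varphi(x,y)\big),
\]
which depends only on the $\mathcal{G}$-components $x,y$; likewise $\gamma(x,u)=(\alpha(x),\beta(u))$, where $\beta$ acts on the one-dimensional central line $c\C$.

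First I would dispose of the two immediate conditions. Finite-dimensionality holds since $\dim K_{0}=\dim\mathcal{G}_{0}+1<\infty$ and $\dim K_{n}=\dim\mathcal{G}_{n}<\infty$ for $n\neq0$. For $\gamma$-invariance, the relation $\alpha(\mathcal{G}_{n})\subset\mathcal{G}_{n}$ gives $\gamma(K_{n})=(\alpha(\mathcal{G}_{n}),0)\subset\mathcal{G}_{n}=K_{n}$ when $n\neq0$, while $\gamma(K_{0})=(\alpha(\mathcal{G}_{0}),\beta(\C))\subset\mathcal{G}_{0}\oplus\C=K_{0}$ because $\beta$ preserves the line $c\C$.

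The core of the argument is the bracket compatibility $d(K_{n},K_{m})\subset K_{n+m}$, and here I would split on the value of $n+m$. Taking homogeneous $x\in\mathcal{G}_{n}$ and $y\in\mathcal{G}_{m}$, the first component $[x,y]$ already lies in $\mathcal{G}_{n+m}$ by the grading of $\mathcal{G}$, so the only question is whether the central term $\varphi(x,y)$ sits in the correct degree. If $n+m\neq0$, the hypothesis $\varphi(\mathcal{G}_{n},\mathcal{G}_{m})\subset\delta_{n+m,0}\mathcal{G}_{n+m}$ forces $\varphi(x,y)=0$, whence $d\big((x,u);(y,v)\big)=([x,y],0)\in\mathcal{G}_{n+m}=K_{n+m}$. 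If $n+m=0$, then $[x,y]\in\mathcal{G}_{0}$ and $\varphi(x,y)\in c\C$, so $d\big((x,u);(y,v)\big)\in\mathcal{G}_{0}\oplus c\C=K_{0}=K_{n+m}$. This is the step where the proposition really lives, and it is where I expect the only genuine subtlety to lie: the support condition on $\varphi$ is tailored precisely so that the central contribution can appear only in total degree $0$, exactly matching the degree into which the line $c\C$ has been placed. The three verified conditions together yield that $(\mathcal{G}\oplus c\C,d,\gamma)$ is $\mathbb{Z}$-graded.
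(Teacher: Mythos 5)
Your proof is correct. The paper actually states this proposition without any proof, treating the verification as routine, so there is no argument in the paper to compare against; your direct check of the three defining conditions (finite-dimensionality, $\gamma$-invariance, and $d(K_n,K_m)\subset K_{n+m}$) is precisely the verification the paper leaves implicit, and your case split on $n+m$ — using the Kronecker-delta hypothesis to kill $\varphi(x,y)$ when $n+m\neq 0$, and placing the central line $c\C$ in degree $0$ when $n+m=0$ — isolates exactly the point where the hypothesis on $\varphi$ is needed.
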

\section{Virasoro Hom-superalgebras}
In this section, we describe  extensions of $q$-deformed Witt superalgebra $\mathcal{W}^{q}$. Let us recall the following result describing its  scalar cohomology:
\begin{thm}\cite{Saadaoui}\label{saadaoui}
$$H^{2}(\mathcal{W}^{q},\C)=\mathbb{C}[\varphi_{0}]\oplus\mathbb{C}[\varphi_{1}],
$$ where
\begin{eqnarray*}
\varphi_{0}(xL_{n}+yG_{m},zL_{p}+tG_{k})&=&xzb_{n}\delta_{n+p,0},\\
\varphi_{1}(xL_{n}+yG_{m},zL_{p}+tG_{k})&=&xtb_{n}\delta_{n+k,-1}-yz b_{p}\delta_{p+m,-1},\\
\end{eqnarray*}
with

\begin{equation*}
b_n=
\left\{ \begin{array}{ll}
\frac{1}{q^{n-2}} \frac{1+q^{2}}{1+q^{n}}\frac{\{n+1\}\{n\}\{n-1\}}{\{3\}\{2\}}          ,\quad \textrm{\ if $n\geq 0$,}\\
-b_{-n}\  \ \ \ \ \ \ \ \ \ \ \ \ \ \ \ \ \ \ \ \ \  \ \ \  \ \ \ \ \  \qquad  \textrm{\ if $n< 0$.}          \\
\end{array} \right.
\end{equation*}
\end{thm}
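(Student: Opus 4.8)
The plan is to compute $Z^2(\mathcal{W}^q,\C)$ and $B^2(\mathcal{W}^q,\C)$ directly for the trivial module $\C$, where the coboundary \eqref{Def cob} reduces to $\delta^1_T h(x,y)=-h([x,y])$ on a linear functional $h\colon\mathcal{W}^q\to\C$ and, specializing \eqref{adjoint2} to trivial coefficients,
$$\delta^2_T\varphi(x,y,z)=-\varphi([x,y],\alpha(z))+(-1)^{|y||z|}\varphi([x,z],\alpha(y))+\varphi(\alpha(x),[y,z]).$$
First I would use the $\Z_2$-grading: as $\C$ is purely even, an even $2$-cocycle is supported on the pairs $(L_n,L_m)$ and $(G_n,G_m)$, while an odd one is supported on $(L_n,G_m)$. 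Since $[G_n,G_m]=0$, every cocycle condition on a triple containing two $G$'s, or evaluated across the wrong parity, collapses, leaving three genuinely nontrivial families of conditions: the triple $(L,L,L)$ (the $LL$-block of an even cocycle), the triple $(L,G,G)$ (its $GG$-block), and the triple $(L,L,G)$ (the odd cocycle).

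Next I would impose the $\Z$-grading $\deg L_n=\deg G_n=n$. Because $\alpha$ is diagonal and $[\mathcal{G}_n,\mathcal{G}_m]\subset\mathcal{G}_{n+m}$, both $\delta^1_T$ and $\delta^2_T$ preserve weight, so $H^2=\bigoplus_w H^2_{(w)}$ and each weight may be treated separately. Writing $F(n)=\varphi(L_n,L_{w-n})$, $\gamma(n)=\varphi(G_n,G_{w-n})$ and $\psi(n)=\varphi(L_n,G_{w-n})$ and substituting \eqref{crochet1}--\eqref{crochet2} together with the values of $\alpha$, the three surviving cocycle conditions become three-term linear recurrences in the $q$-numbers $\{n\}$ weighted by the twist factors $(1+q^n)$ and $(1+q^{n+1})$; super-skew-symmetry supplies the symmetries $F(n)=-F(w-n)$ and $\gamma(n)=\gamma(w-n)$.

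The core of the argument is then a degree-reduction step followed by solving the surviving recurrence. Specializing each recurrence to a triple containing $L_0$ (note $\{0\}=0$, so that $[L_0,L_n]=\{n\}L_n$) expresses every off-diagonal value in terms of $\varphi(L_0,L_w)$ or $\varphi(L_0,G_w)$; since $\delta^1_T$ of the degree-$w$ functional $L_w^\ast$ (resp.\ $G_w^\ast$) reproduces exactly these values, every weight-$w$ cocycle with $w\neq 0$ (resp.\ $w\neq -1$ in the odd case) is a coboundary, so $H^2_{(w)}=0$ there. The $GG$-block admits no coboundaries at all (because $\delta^1_T h$ vanishes on $(G,G)$ when $[G,G]=0$), and one checks that its recurrence forces $\gamma\equiv0$, so the $GG$-block contributes nothing. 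This isolates the weight $w=0$ in the even $LL$-block and $w=-1$ in the odd block; on these diagonals the condition becomes a two-step recurrence for $b_n:=\varphi(L_n,L_{-n})$ (resp.\ $\psi(n):=\varphi(L_n,G_{-n-1})$), which, after normalizing the low-degree values by a coboundary, is solved by induction to give the stated closed form $b_n=\tfrac{1}{q^{\,n-2}}\tfrac{1+q^2}{1+q^n}\tfrac{\{n+1\}\{n\}\{n-1\}}{\{3\}\{2\}}$.

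I expect the recurrence-solving to be the main obstacle. Two points require care: one must verify that in each nonzero weight the recurrence has solution space of dimension exactly one (equal to the coboundaries), which relies on the non-vanishing of the $q$-number coefficients such as $\{2\}\{3\}$; and one must check that the single cubic-in-$q$-number formula for $b_n$ actually satisfies the recurrence for all $n$, including the fact that the even $(L,L,L)$-recurrence and the odd $(L,L,G)$-recurrence are compatible and pin down the \emph{same} sequence $b_n$. Finally I would confirm nontriviality: weight-$0$ coboundaries have the form $F(n)=(\{n\}-\{-n\})c$, whereas $b_0=b_1=0$ and $b_2\neq0$ force $b_n$ to be non-proportional to $\{n\}-\{-n\}$, so $[\varphi_0]$ and $[\varphi_1]$ are nonzero and independent, yielding $H^2(\mathcal{W}^q,\C)=\C[\varphi_0]\oplus\C[\varphi_1]$.
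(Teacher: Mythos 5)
There is nothing in this paper to compare your attempt against: Theorem~\ref{saadaoui} is imported from \cite{Saadaoui}, and the present paper contains no proof of it. Your proposal can therefore only be judged on its own terms and against the method of that reference, which is of exactly the recurrence-based kind you describe (compare equation \eqref{hamza} in Section~5 of this paper, likewise quoted as ``solved in \cite{Saadaoui}'').

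Judged on its own terms, your outline is correct, and the reductions you assert do hold. The parity bookkeeping is right: even cochains live on the $(L,L)$ and $(G,G)$ blocks, odd ones on $(L,G)$, and the only nonvacuous cocycle conditions come from the triples $(L,L,L)$, $(L,G,G)$ and $(L,L,G)$. The $L_0$-reduction survives the Hom-twist because of the $q$-number identity $\{b\}(1+q^{c})+\{c\}(1+q^{b})=2\{b+c\}$: evaluating the cocycle condition on $(L_0,L_b,L_c)$ gives $\{w\}\,\varphi(L_b,L_c)=(\{c\}-\{b\})\,\varphi(L_0,L_w)$ with $w=b+c$, so for $w\neq0$ the cocycle is $\delta^1_T$ of a multiple of the dual functional of $L_w$; the odd case is identical with $\{w+1\}$ in place of $\{w\}$, and the same identity yields $2\{b+c+2\}\,\varphi(G_b,G_c)=0$ on the $(G,G)$ block, whose residual weight $-2$ is then killed by the conditions with $a\neq0$, as you claim. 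Putting $b=1$ in the surviving weight-$0$ (resp.\ weight-$(-1)$) recurrence shows the cocycle space there is at most two-dimensional, with coboundaries accounting for one dimension. Two caveats, however. First, every division above requires $\{n\}\neq0$ for $n\neq0$ and $1+q^{n}\neq0$, i.e.\ $q$ generic (not a root of unity); the statement suppresses this hypothesis and your proof must make it explicit. Second, and more substantively, the step you defer --- checking that the displayed $b_n$ satisfies the full recurrences, equivalently that $\varphi_0$ and $\varphi_1$ really are cocycles and that the even and odd recurrences produce the same sequence $b_n$ --- is precisely the existence half of the theorem; until that finite induction (from $b_1=0$, $b_2=1$) is carried out, your argument only bounds the even and odd parts of $H^{2}(\mathcal{W}^{q},\C)$ by one dimension each. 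That is a gap in the write-up rather than in the strategy, but it is where essentially all of the computation of the cited reference lives.
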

Using the even non trivial $2$-cocycle $\varphi_{0}$ defined in Theorem \ref{saadaoui}, we can define the followings Virasoro Hom-superalgebras :
\begin{itemize}
 \item
 The \emph{Neveu-Schwarz Hom-superalgebra} can be presented as the $\Z_2$-graded vector space with $\{L_n, D\}_{n\in \Z}$  as a basis of the even homogeneous part and $\{F_n\}_{n\in \frac{1}{2}+\Z}$  as a basis of the odd homogeneous part. It is equipped with the commutator

\begin{small}
\begin{align*}
&[L_n,L_m]=(\{m\}-\{n\})L_{n+m}+D \frac{1}{q^{n-2}} \frac{1+q^{2}}{1+q^{n}}\frac{\{n+1\}\{n\}\{n-1\}}{\{12\}}\delta_{n+m,0},&\\
 & \qquad\qquad\qquad\qquad\qquad\qquad \qquad\qquad \forall (n,m)\in(\Z_+\times\Z)\cup(\Z_-\times\Z_-)&\\
&[L_n,F_m]=(\{m+\frac{1}{2}\}-\{n\})F_{n+m}&\\
&[F_n,F_m]=[F_n,D]=[L_n,D]=0& .
\end{align*}
\end{small}
 \item
 The  \emph{Ramond Hom-superalgebra} satisfies the following commutation relations:
 \begin{small}
\begin{align*}
&[L_n+z,L_m+z']=(\{m\}-\{n\})L_{n+m}+c   \frac{1}{q^{n-2}} \frac{1+q^{2}}{1+q^{n}}\frac{\{n+1\}\{n\}\{n-1\}}{\{12\}}           \delta_{n+m,0}, \forall n>0\\
&[L_n+z,G_m+z']=(\{m+1\}-\{n\})G_{n+m}\\
&[G_n+z,G_m+z']=0.
\end{align*}
\end{small}

\end{itemize}

Using the odd non trivial $2$-cocycle $\varphi_{1}$ defined in Theorem \ref{saadaoui}, we can define the  \emph{special Ramond Hom-superalgebra}.
Then  it is equiped with the commutator
\begin{small}
\begin{align*}
& [L_n+z,L_m+z']=(\{m\}-\{n\})L_{n+m}& & \\
&[L_n+z,G_m+z']=(\{m+1\}-\{n\})G_{n+m}  +c   \frac{1}{q^{n-2}} \frac{1+q^{2}}{1+q^{n}}\frac{\{n+1\}\{n\}\{n-1\}}{\{12\}}           \delta_{n+m,0},\ \forall n>0\\
&[G_n+z,G_m+z']=0.
\end{align*}
\end{small}

In the above  cases, the map $\gamma$ is defined by 
\[\gamma(x,z)=(\alpha(x), z),\qquad       \forall x\in \mathcal{W}^{q}, \forall z\in\C,\]
  where $\alpha$ is given in Section \ref{witt}.
\begin{rem} We denote the  Ramond Hom-Lie superalgebra by $HR$, the  Neveu-Schwarz  Hom-Lie superalgebra by $HN$ and the special Ramond Hom-superalgebra by $SHR$.
The map $f:HR\rightarrow HN$ defined by $f(L_n)=L_n$, $f(G_n)=F_{n+\frac{1}{2}}$ and $f(c)=D$ is a Hom-Lie superalgebras isomorphism. 

Hence, Virasoro Hom-superalgebras are characterized in the following Theorem.
\end{rem}
\begin{thm}
Every Virasoro Hom-superalgebra is isomorphic to one of the following Virasoro Hom-Lie  superalgebras :
\begin{itemize}
 \item Ramond Hom-superalgebra,
    \item        Special Ramond Hom-superalgebra,
  \item      Trivial    Virasoro Hom-superalgebra.
 \end{itemize}
\end{thm}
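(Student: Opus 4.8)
The plan is to reduce the classification to the scalar cohomology of $\mathcal{W}^q$ recorded in Theorem~\ref{saadaoui}, combined with the correspondence between one-dimensional central extensions and their defining cocycles. By definition, a Virasoro Hom-superalgebra is a one-dimensional central extension $\mathcal{W}^q\oplus c\C$ which, through the bracket \eqref{crochetViraP4}, is determined by a homogeneous $2$-cocycle $\varphi\in Z^2(\mathcal{W}^q,\C)$ on the trivial module $\C$; its parity fixes the $\Z_2$-grading according to the ensuing Remark on the parity of $\varphi$ (even $\varphi$ gives the standard grading, odd $\varphi$ the twisted one with $c$ odd).

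First I would invoke Theorem~\ref{saadaoui}, giving $H^2(\mathcal{W}^q,\C)=\C[\varphi_0]\oplus\C[\varphi_1]$ with $\varphi_0$ even and $\varphi_1$ odd. Because $\varphi$ is homogeneous, its class is a scalar multiple of a single generator: if $\varphi$ is even then $\varphi=\lambda\varphi_0+\delta^1 h$ for some $\lambda\in\C$ and some even $1$-cochain $h$ with $h\circ\alpha=\beta\circ h$, and if $\varphi$ is odd then $\varphi=\mu\varphi_1+\delta^1 h$ for some $\mu\in\C$ and odd $h$. Next I would apply the Theorem asserting that $H^2_0(\mathcal{G},V)$ parametrizes equivalence classes of extensions, whose proof supplies the explicit isomorphism $\Phi(x,v)=(x,v-h(x))$ between extensions whose cocycles differ by $\delta^1 h$; the same computation, with $h$ taken odd, handles the odd case. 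Hence the given Virasoro Hom-superalgebra is isomorphic to the central extension defined by $\lambda\varphi_0$ or by $\mu\varphi_1$.

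It then remains to absorb the scalar and collect the cases. When the coefficient is nonzero, the map fixing $\mathcal{W}^q$ and sending the central generator $c\mapsto\lambda^{-1}c$ (respectively $c\mapsto\mu^{-1}c$) is a Hom-Lie superalgebra isomorphism onto the extension by $\varphi_0$, i.e.\ the Ramond Hom-superalgebra (respectively by $\varphi_1$, the special Ramond Hom-superalgebra); this map commutes with $\gamma$ since $\gamma(x,z)=(\alpha(x),z)$ restricts to the identity on $c\C$. When the coefficient vanishes, $\varphi=\delta^1 h$ is a coboundary, the extension is trivial, and one obtains the trivial Virasoro Hom-superalgebra. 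Since the Neveu-Schwarz Hom-superalgebra was already identified with the Ramond one via the isomorphism $f$ of the preceding Remark, these three cases exhaust all Virasoro Hom-superalgebras. The main obstacle I anticipate is the odd-cocycle branch: the correspondence Theorem is stated only for the even cohomology $H^2_0$, so I must check that its proof and the rescaling $c\mapsto\mu^{-1}c$ go through verbatim when $c$ carries odd parity and the grading is twisted, keeping the sign bookkeeping consistent with the superalgebra axioms; everything else is a direct transcription of the even-case argument.
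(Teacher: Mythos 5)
Your proposal is correct and takes essentially the same approach as the paper: the paper gives no separate proof, treating the theorem as an immediate consequence of the computation $H^{2}(\mathcal{W}^{q},\C)=\C[\varphi_{0}]\oplus\C[\varphi_{1}]$, the correspondence between central extensions and second cohomology classes, and the preceding Remark identifying the Neveu--Schwarz and Ramond Hom-superalgebras. Your additional care with the odd-cocycle branch and the rescaling $c\mapsto\lambda^{-1}c$ simply makes explicit the details the paper leaves implicit.
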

 \section{Cohomology of Extensions of   Hom-Lie superalgebras  and Virasoro Hom-superalgebras}
Let $$0\longrightarrow (V,\beta){\longrightarrow} (K,d,\gamma){\longrightarrow }(\mathcal{G},\alpha) \longrightarrow 0 $$
 be an extension of $(\mathcal{G},\delta, \alpha)$ by   a  representation $(V,\lambda,\beta )$,  where $K=\mathcal{G}\oplus V$ and $d=\delta+\lambda+\varphi$  $(\varphi\in Z^2(\mathcal{G},V))$.
\subsection{ Derivations of Extensions of   Hom-Lie superalgebras}
If $f\in \mathcal{C}^{1}(K,K)$,  we set $f=\widetilde{f}+\widehat{f}+v+\widehat{v}$ where $\widetilde{f}\in \mathcal{C}^{1}(\mathcal{G},\mathcal{G})$,  $\widehat{f}\in \mathcal{C}^{1}(V,\mathcal{G})$, $v\in \mathcal{C}^{1}(\mathcal{G},V)$ and $\widehat{v}\in \mathcal{C}^{1}(V,V)$.
If $(\phi,\psi)\in \mathcal{C}^2(K,K)\times \mathcal{C}^1(K,K)$, we define
\begin{align*}
&\phi\circ\psi(X,Y)=\phi(\psi(X),\gamma^r(Y),)-(-1)^{|X||Y|}\phi(\psi(Y),\gamma^r(X),) \ \forall X,Y\in K_{0}\cup K_{1}\\
\end{align*}
and
\begin{align*}
[\phi,\psi]&=\phi\circ\psi-(-1)^{|\psi||\phi|}\psi\circ\phi.\\
\end{align*}
For $f\in \mathcal{C}^{1}(K,K)$, we have
\begin{eqnarray*}
 && \delta_{K}^{1}(f)((x+u,y+v))
  =\\&& -f(d(x+u,y+v))+d(f(x+u),\gamma^r(y+v))+(-1)^{|f||x+u|}d(\gamma^r(x+u),f(y+v)), \end{eqnarray*} which implies $[d,f]=\delta_{K}^{1}(f).$
 
  Then   \begin{equation}
\big(f\  \textrm{ is a $\alpha^r$- derivation of }\  K\big) \Leftrightarrow \big([d,f]\equiv0\big).
\end{equation}
 For all $x,y\in\mathcal{G},\ u,v\in V,$ we have
\begin{align*}
[d,f](x,y)&=\Big([\delta,\widetilde{f}]+[\delta+\lambda,v]+[\varphi,\widehat{f}+\widehat{v}]\Big)(x,y),\\
[d,f](x,v)&=\Big([\delta,\widehat{f}]+[\lambda,\widetilde{f}+\widehat{f}+\widehat{v}]+[\varphi,\widehat{f}]\Big)(x,v),\\
[d,f](u,v)&=0.
\end{align*}

Then, we deduce the following result :
\begin{thm}\label{f5derivationS}
 For all $x,y\in\mathcal{G},\ v\in V$, we have
\begin{equation}\label{5derivationS}
\big(f\ \textrm{is a }\alpha^r \text{-derivation of }\  K \big)\Leftrightarrow
\left\{ \begin{array}{ll}
\Big([\delta,\widetilde{f}]+[\varphi,\widehat{f}]\Big)(x,y)=0,\\
\Big(\big[\varphi,\widetilde{f}+\widehat{v}\big]+\big[\delta+\lambda,v\big]\Big)(x,y)=0,\\
\big[  \delta+\lambda,\widehat{f}\big](x,v)=0,\\
\Big( [\lambda,\widetilde{f}+\widehat{v}]+[\varphi,\widehat{f}]\Big)(x,v)=0.\\
\end{array} \right.
\end{equation}
\end{thm}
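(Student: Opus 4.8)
The plan is to start from the identity $[d,f]=\delta_K^1(f)$ established just before the statement, so that $f$ is an $\alpha^r$-derivation precisely when $[d,f]\equiv0$. Writing $d=\delta+\lambda+\varphi$ and $f=\widetilde{f}+\widehat{f}+v+\widehat{v}$ and using bilinearity of the bracket $[\cdot,\cdot]$ on cochains, I would expand $[d,f]$ into the twelve pieces $[\phi,\psi]$ with $\phi\in\{\delta,\lambda,\varphi\}$ and $\psi\in\{\widetilde{f},\widehat{f},v,\widehat{v}\}$. The decisive structural observation is that each factor has a fixed source and target within the decomposition $K=\mathcal{G}\oplus V$: one has $\delta\in\mathcal{C}(\mathcal{G}^2,\mathcal{G})$, $\varphi\in\mathcal{C}(\mathcal{G}^2,V)$, $\lambda\in\mathcal{C}(\mathcal{G}V,V)$, while $\widetilde{f}\colon\mathcal{G}\to\mathcal{G}$, $\widehat{f}\colon V\to\mathcal{G}$, $v\colon\mathcal{G}\to V$ and $\widehat{v}\colon V\to V$. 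Consequently, for each homogeneous input every composite $\phi\circ\psi$ and $\psi\circ\phi$ either vanishes or lands in a single summand of $\mathcal{G}\oplus V$, so every evaluation of $[d,f]$ splits unambiguously into a $\mathcal{G}$-valued and a $V$-valued part.

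I would then evaluate $[d,f]$ on the three inequivalent bidegrees of arguments $(x,y)\in\mathcal{G}^2$, $(x,v)\in\mathcal{G}\times V$ and $(u,v)\in V^2$, the order $V\times\mathcal{G}$ being fixed by skew-symmetry. On $\mathcal{G}^2$ the only surviving contributions are $[\delta,\widetilde{f}]$ and $[\varphi,\widehat{f}]$ in the $\mathcal{G}$-summand and $[\delta,v]$, $[\lambda,v]$, $[\varphi,\widetilde{f}]$, $[\varphi,\widehat{v}]$ in the $V$-summand; regrouping them as $[\delta,\widetilde{f}]+[\varphi,\widehat{f}]$ and $[\delta+\lambda,v]+[\varphi,\widetilde{f}+\widehat{v}]$ yields the first two equations of \eqref{5derivationS}. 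On $\mathcal{G}\times V$ the $\mathcal{G}$-valued survivors assemble into $[\delta+\lambda,\widehat{f}]$ and the $V$-valued ones into $[\lambda,\widetilde{f}+\widehat{v}]+[\varphi,\widehat{f}]$, giving the last two equations. Finally, the bracket computation recorded just before the statement gives $[d,f](u,v)=0$ on $V^2$, so this bidegree contributes no further constraint.

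The substance of the argument, and essentially its only pitfall, is the bookkeeping in this expansion: for each of the twelve brackets one applies the composition rule $\phi\circ\psi(X,Y)=\phi(\psi(X),\gamma^r(Y))-(-1)^{|X||Y|}\phi(\psi(Y),\gamma^r(X))$ together with $\psi\circ\phi(X,Y)=\psi(\phi(X,Y))$, carries the Koszul signs $(-1)^{|X||Y|}$ and $(-1)^{|\psi||\phi|}$ through, and checks that the surviving term sits in the claimed summand. I expect the sign discipline to be the delicate point rather than any conceptual difficulty. Since each of the four displayed identities is precisely the vanishing of one graded component of $[d,f]$ on one bidegree, their conjunction is equivalent to $[d,f]\equiv0$ on all of $K$, which by the opening reduction is equivalent to $f$ being an $\alpha^r$-derivation; this establishes both implications simultaneously.
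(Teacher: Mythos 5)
Your proposal is correct and follows essentially the same route as the paper: the paper's argument is exactly the reduction to $[d,f]=\delta_K^1(f)$, hence to $[d,f]\equiv 0$, followed by expanding $[d,f]$ on the bidegrees $\mathcal{G}^2$, $\mathcal{G}\times V$, $V^2$ and splitting each evaluation into its $\mathcal{G}$-valued and $V$-valued components (with $[d,f](u,v)=0$ on $V^2$). Your regrouped terms agree with the four displayed equations, and in fact your bookkeeping is slightly more careful than the paper's preliminary display, which omits the $[\varphi,\widetilde{f}]$ contribution that the theorem itself retains.
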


\subsection{ Derivations of Virasoro Hom-superalgebras}
Let recall the following result (for the proof see \cite{Saadaoui}).
\begin{lem}\label{P4lemma}
The set of $\alpha^{0}$-derivations of the Hom-Lie superalgebra $\mathcal{W}^{q}$ is $$Der_{\alpha^{0}}(\mathcal{W}^{q})=\langle D_{1}\rangle\oplus\langle D_{2}\rangle\oplus \langle D_{3}\rangle\oplus\langle D_{4}\rangle$$  where $D_{1}$, $D_{1}$, $D_{2}$, $D_{3}$ and $D_4$ are defined, with respect to the basis as
\begin{align*}
&D_{1}(L_{n})=nL_n,& \ &D_{1}(G_n)=nG_n,&\\
&D_{2}(L_{n})=0,& \ &D_{2}(G_n)=G_n,&\\
&D_{3}(L_{n})=nG_{n-1},&\ &D_{3}(G_n)=0,&\\
&D_{4}(L_{n})=0,&\ &D_{4}(G_n)=L_{n+1}.&
\end{align*}
\end{lem}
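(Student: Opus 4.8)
The plan is to determine $Der_{\alpha^{0}}(\mathcal{W}^{q})=Z^1(\mathcal{W}^{q},\mathcal{W}^{q})$ (adjoint action, $r=0$) by exploiting the double grading of $\mathcal{W}^{q}$: the $\Z_2$-parity and the $\Z$-grading with $\deg L_n=\deg G_n=n$. Both defining constraints on a derivation $f$ — the cocycle identity $\delta^{1}_{\mathcal{G}}(f)=0$ of \eqref{adjoint1} and the commutation $f\circ\alpha=\alpha\circ f$ built into $C^1_{\alpha,\alpha}$ — are homogeneous for both gradings. First I would decompose $f$ into components of definite parity and $\Z$-degree $k$ (each $f(L_n),f(G_n)$ has finite support, so this is well defined) and note that every homogeneous piece is again an $\alpha^0$-derivation; it then suffices to classify homogeneous derivations and reassemble. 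Throughout I assume $q$ is generic (not a root of unity), so that $\{m\}=\{n\}\iff m=n$ and $1-q^{k}\neq 0$ for $k\neq 0$; this hypothesis is essential and should be stated explicitly.

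The decisive first step is to use $f\circ\alpha=\alpha\circ f$ to restrict which degrees occur. Writing an even degree-$k$ derivation as $f(L_n)=a_nL_{n+k}$, $f(G_n)=b_nG_{n+k}$ and comparing $f(\alpha(L_n))$ with $\alpha(f(L_n))$ yields $a_n\,q^{n}(1-q^{k})=0$, and similarly $b_n\,q^{n+1}(1-q^{k})=0$, so a nonzero even derivation must have $k=0$. The same computation for an odd derivation, written $f(L_n)=b_nG_{n+k}$, $f(G_n)=a_nL_{n+k}$, gives $b_n\,q^{n}(1-q^{k+1})=0$ and $a_n\,q^{n+1}(1-q^{k-1})=0$, forcing $k=-1$ on the $L$-part and $k=+1$ on the $G$-part; hence an odd derivation splits into a degree $-1$ piece with $f(G_n)=0$ and a degree $+1$ piece with $f(L_n)=0$. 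This is exactly where the Hom-structure does the work: without $f\circ\alpha=\alpha\circ f$ infinitely many degrees would survive.

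The second step is to feed the surviving ansätze into the super-derivation identity $f([x,y])=(-1)^{|x||f|}[x,f(y)]-(-1)^{|y|(|f|+|x|)}[y,f(x)]$ for the generating brackets \eqref{crochet1}–\eqref{crochet2} together with $[G_n,G_m]=0$. In the even degree-$0$ case the bracket $[L_n,L_m]$, after cancelling the common factor $\{m\}-\{n\}$ (nonzero for $m\neq n$), gives the additive relation $a_{n+m}=a_n+a_m$, while $[L_n,G_m]$ gives $b_{n+m}=a_n+b_m$ for $n\neq m+1$; in the odd degree $-1$ case $[L_n,L_m]$ gives $b_{n+m}=b_n+b_m$; and in the odd degree $+1$ case $[L_n,G_m]$ and $[G_n,G_m]=0$ force $a$ to be constant. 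The last step is to solve these restricted functional equations on $\Z$: the additive ones give $a_n=c\,n$ (and $b_n=c_1 n+c_2$ in the mixed case), and the constancy relation gives $a_n=c_4$. Reassembling, the even derivations form $\langle D_1\rangle\oplus\langle D_2\rangle$, the odd degree $-1$ ones $\langle D_3\rangle$, and the odd degree $+1$ ones $\langle D_4\rangle$, which is the claimed decomposition. The main obstacle is not conceptual but bookkeeping: keeping the super-signs correct in the derivation identity, and disposing of the finitely many excluded indices ($m=n$, $n=m+1$) in the Cauchy-type equations, which I would handle by re-evaluating each relation at a second pair of indices.
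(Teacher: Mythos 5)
Your proposal is correct. Note that the paper itself contains no proof of this lemma---it is quoted with a pointer to \cite{Saadaoui}---so there is nothing in the text to compare against line by line; your route (split a derivation into parity- and $\Z$-homogeneous pieces, use $f\circ\alpha=\alpha\circ f$ with $q$ generic to kill every degree except $0$ on the even side and $\mp1$ on the two odd components, then solve the resulting Cauchy-type equations $a_{n+m}=a_n+a_m$, $b_{n+m}=a_n+b_m$, $b_{n+m}=b_n+b_m$, and the constancy relation coming from the Leibniz rule) is exactly the direct computation carried out in that reference. The only points to make explicit in a written version are the routine ones you already flag: the check that each $D_i$ really is an $\alpha^{0}$-derivation (needed for the reverse inclusion), and the standing assumption that $q$ is not a root of unity, without which the degree-restriction step fails.
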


\

Let $(\mathcal{W}^{q}_{\varphi},d,\gamma)$ 
be a Hom-Virasoro-superalgebra. Then 
\[\mathcal{W}^{q}_{\varphi}=\mathcal{W}^{q}\oplus \C,\, d=[.,.]+\varphi,\, \textit{and } \gamma(x,z)=(\alpha(x),z),\]
   where $\varphi\in \C\varphi_{0}\cup\C\varphi_{1}$. 
\begin{lem}\label{P4lemma1}
\begin{equation*}
\big(f\ \textrm{ is a }\alpha^r \text{-derivation of } \mathcal{W}^{q}_{\varphi}\big) \Rightarrow \big(\widehat{f}\equiv0\big).\\
\end{equation*}
\end{lem}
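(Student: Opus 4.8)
The plan is to isolate $\widehat{f}$ using the third of the four derivation conditions in Theorem \ref{f5derivationS}, namely $[\delta+\lambda,\widehat{f}](x,v)=0$, since this is the only condition that constrains $\widehat{f}$ without mixing it with $\widetilde{f}$, $v$ or $\widehat{v}$. For a Hom-Virasoro-superalgebra one has $V=c\C$ with trivial module structure, so $\lambda=[.,.]_{V}\equiv0$ and this condition reduces to $[\delta,\widehat{f}](x,v)=0$ for all $x\in\mathcal{W}^{q}$ and $v\in V$.

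First I would unwind this bracket using the composition formulas for $(\phi,\psi)\in\mathcal{C}^2(K,K)\times\mathcal{C}^1(K,K)$. In the term $\widehat{f}\circ\delta(x,v)$ the inner expression $\delta(x,v)$ vanishes, because the bracket $\delta$ of $\mathcal{W}^{q}$ is extended by zero to arguments lying in $V$; hence this summand contributes nothing. In $\delta\circ\widehat{f}(x,v)$ the summand containing $\widehat{f}(x)$ vanishes since $\widehat{f}$ is supported on $V$, while $\gamma^{r}$ restricts to the identity on $V$ because $\gamma(x,z)=(\alpha(x),z)$. What survives is, up to sign, the bracket $[\widehat{f}(v),\alpha^{r}(x)]$, so the condition becomes
\begin{equation*}
[\widehat{f}(v),\alpha^{r}(x)]=0,\qquad \forall\, x\in\mathcal{W}^{q},\ v\in V.
\end{equation*}

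Next I would exploit that $V=c\C$ is one-dimensional: writing $w:=\widehat{f}(c)\in\mathcal{W}^{q}$, the displayed identity says $[w,\alpha^{r}(x)]=0$ for every $x$. Since $\alpha(L_{n})=(1+q^{n})L_{n}$ and $\alpha(G_{n})=(1+q^{n+1})G_{n}$ have nonzero eigenvalues for generic $q$, the images $\alpha^{r}(L_{m})$ and $\alpha^{r}(G_{m})$ are nonzero scalar multiples of $L_{m}$ and $G_{m}$, so $w$ commutes with every basis element; that is, $w$ lies in the centre of $\mathcal{W}^{q}$.

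Finally I would show that this centre is trivial by a direct computation with \eqref{crochet1} and \eqref{crochet2}. Writing $w=\sum_{n}a_{n}L_{n}+\sum_{n}b_{n}G_{n}$ and expanding $[w,L_{m}]=0$, the coefficient of $L_{n+m}$ is $a_{n}(\{m\}-\{n\})$ and that of $G_{n+m}$ is $b_{n}(\{m\}-\{n+1\})$; since for each fixed $n$ one may choose $m$ making the relevant $q$-number difference nonzero, every $a_{n}$ and $b_{n}$ vanishes, whence $w=0$ and $\widehat{f}\equiv0$. I expect the only delicate point to be the bookkeeping in unwinding $[\delta,\widehat{f}]$, that is, determining precisely which of the composition terms survive and with which sign; the centrality computation that closes the argument is routine.
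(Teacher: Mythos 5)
Your proposal is correct and takes essentially the same approach as the paper: both isolate the third condition of Theorem \ref{f5derivationS}, use $\lambda\equiv 0$ to reduce it to $\delta(\widehat{f}(z),\alpha^{r}(L_n))=0$, i.e.\ $[\widehat{f}(z),L_n]=0$ for all $n$, and conclude $\widehat{f}(z)=0$. The only difference is that you make explicit the final step (that an element commuting with all $L_n$ must vanish, via the coefficient computation with the $q$-numbers), which the paper asserts without detail.
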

\begin{proof}
Let  $f$ be an $\alpha^r$-derivation of  $\mathcal{W}^{q}_{\varphi}$.\\
Using Theorem \ref{f5derivationS} and $\lambda\equiv 0$,  $\forall z\in\C, \forall n\in\Z$ we have
\begin{align*}
\big[  \delta,\widehat{f}\big](L_n,z)&=0\\
&\Rightarrow -(-1)^{|x||z|}\delta(\widehat{f}(z),\alpha(L_n))=0,\\
&\Rightarrow[\widehat{f}(z),L_{n}]=0, \\
&\Rightarrow \widehat{f}(z)=0.
\end{align*}
\end{proof}
In the following, we provide the $\alpha^{0}$-derivations of  $\mathcal{W}^{q}_{\varphi}$ explicitly.
\begin{prop}
The set of $\alpha^{0}$-derivations of the Virasoro   Hom-Lie superalgebra  $\mathcal{W}^{q}_{\varphi_i}$ with $i=0,1$,  is $$Der_{\alpha^{0}}(\mathcal{W}^{q}_{\varphi_{i}})=\langle D_{1}\rangle\oplus\langle D_{2}\rangle\oplus \langle D_{3}\rangle\oplus\langle D_{4}\rangle,$$  where $D_{1}$, $D_{1}$, $D_{2}$, $D_{3}$ and $D_4$ are defined, with respect to the basis, as
\begin{align*}
&D_{1}(L_{n})=nL_n,&  &D_{1}(G_n)=nG_n,&D_{1}(1)=0,\\
&D_{2}(L_{n})=0,&  &D_{2}(G_n)=G_n,&D_{2}(1)=0,\\
&D_{3}(L_{n})=nG_{n-1},& &D_{3}(G_n)=0,&D_{3}(1)=0,\\
&D_{4}(L_{n})=0,& &D_{4}(G_n)=L_{n+1},&D_{4}(1)=0.
\end{align*}
\end{prop}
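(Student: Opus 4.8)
The plan is to take an arbitrary $\alpha^{0}$-derivation $f$ of $\mathcal{W}^{q}_{\varphi_{i}}$ and, relative to the splitting $K=\mathcal{W}^{q}\oplus\C$, write $f=\widetilde{f}+\widehat{f}+v+\widehat{v}$ with $\widetilde{f}\in\mathcal{C}^{1}(\mathcal{W}^{q},\mathcal{W}^{q})$, $\widehat{f}\in\mathcal{C}^{1}(\C,\mathcal{W}^{q})$, $v\in\mathcal{C}^{1}(\mathcal{W}^{q},\C)$ and $\widehat{v}\in\mathcal{C}^{1}(\C,\C)$. First I would apply Lemma \ref{P4lemma1} to obtain $\widehat{f}\equiv0$. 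Since the central module is trivial ($\lambda\equiv0$ because $[.,.]_{V}\equiv0$), the system of Theorem \ref{f5derivationS} collapses, after discarding every term carrying $\widehat{f}$ or $\lambda$, to the two relations
\begin{align*}
[\delta,\widetilde{f}]=0,\qquad\qquad [\varphi_{i},\widetilde{f}+\widehat{v}]+[\delta,v]=0 .
\end{align*}

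The first relation says precisely that $\widetilde{f}$ is an $\alpha^{0}$-derivation of $\mathcal{W}^{q}$, so Lemma \ref{P4lemma} yields $\widetilde{f}=aD_{1}+bD_{2}+cD_{3}+eD_{4}$ for scalars $a,b,c,e$. The whole content of the proposition is thus to show that the second relation forces the central pieces $v$ and $\widehat{v}$ to vanish; then $f=\widetilde{f}$ lies in the asserted span, and the converse is the direct verification of both relations for each $D_{j}$ extended by $D_{j}(1)=0$. I would separate the even and the odd sector. In the odd sector $\widehat{v}\colon\C\to\C$ is automatically zero, being an odd self-map of a one-dimensional homogeneous space, so only $v$ survives there; in the even sector $\widehat{v}=\mu\,\mathrm{id}_{\C}$ for a scalar $\mu$, while $v$ is concentrated on the degrees where $\varphi_{i}$ is supported.

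The computational core is to evaluate $[\varphi_{i},\widetilde{f}+\widehat{v}]+[\delta,v]=0$ on the basis pairs $(L_{n},L_{m})$, $(L_{n},G_{m})$ and $(G_{n},G_{m})$, using the explicit cocycles of Theorem \ref{saadaoui}. Unwinding the cochain bracket, the summand $[\delta,v]$ produces $\mp\,v([x,y])$, the summand $[\varphi_{i},\widehat{v}]$ produces $\mp\,\mu\,\varphi_{i}(x,y)$, and $[\varphi_{i},\widetilde{f}]$ produces the transformed cocycle $\varphi_{i}(\widetilde{f}(x),y)-(-1)^{|x||y|}\varphi_{i}(\widetilde{f}(y),x)$. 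Off the support of $\varphi_{i}$ the identity reduces to $(\{m\}-\{n\})\,v(L_{n+m})=0$ together with its $G$-analogue, and choosing for each target degree a pair with $\{m\}\neq\{n\}$ annihilates $v$ at every degree save the one dictated by the support. On the support (the locus $n+m=0$ for $\varphi_{0}$, respectively $n+m=-1$ for $\varphi_{1}$) the identity becomes a scalar equation in $n$ of the schematic shape $(\{-n\}-\{n\})\,v(\,\cdot\,)=(\mathrm{const})\,b_{n}$, carrying in addition the term $\mu\,b_{n}$ in the even sector.

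The hard part is the closing analysis on the support. The decisive mechanism is the antisymmetry $b_{-n}=-b_{n}$: it forces the symmetric residue of the transformed cocycle to cancel on the diagonal, which is what both leaves the corresponding coefficients of $\widetilde{f}$ free and removes $\varphi_{i}$ from the right-hand side, so that the surviving equation involves only $v$ and $\mu$. To finish I would compare that scalar identity at two indices where $b_{n}\neq0$ (for instance $n=2$, where $b_{2}=1$, and $n=3$) and invoke the linear independence of the sequences $(b_{n})_{n}$ and $(\{-n\}-\{n\})_{n}$ — a consequence of the differing growth of the underlying $q$-numbers — to conclude $\mu=0$ and $v\equiv0$. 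This is the delicate step of the whole computation, since it is exactly the vanishing of these central corrections that isolates $D_{1},D_{2},D_{3},D_{4}$ as the full $\alpha^{0}$-derivation algebra.
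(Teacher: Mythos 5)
Your strategy coincides with the paper's (decompose $f=\widetilde{f}+\widehat{f}+v+\widehat{v}$, kill $\widehat{f}$ via Lemma \ref{P4lemma1}, reduce to $[\delta,\widetilde{f}]=0$ and $[\varphi_{i},\widetilde{f}+\widehat{v}]+[\delta,v]=0$, then analyse the latter off and on the support of $\varphi_{i}$), but the step you single out as the ``decisive mechanism'' is precisely where the argument fails. The antisymmetry $b_{-n}=-b_{n}$ does cancel the transformed-cocycle terms for $D_{1},D_{2},D_{3}$ against $\varphi_{0}$ and for $D_{3},D_{4}$ against $\varphi_{1}$, but not in the remaining cases, and those cases are fatal. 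For $i=0$ take $\widetilde{f}=D_{4}$ and evaluate on $(L_{n},G_{m})$ with $n+m=-1$: since $D_{4}(L_{n})=0$, only one term of the transformed cocycle survives, namely $-\varphi_{0}(D_{4}(G_{m}),L_{n})=-\varphi_{0}(L_{m+1},L_{n})=-b_{-n}=b_{n}\neq 0$, and there is no partner term to cancel it. The support equation is then $\lambda_{4}b_{n}=(\{-n\}-\{n\})\,v(G_{-1})$ for all $n$, and the very linear-independence argument you invoke forces $\lambda_{4}=0$: carried out correctly, your method \emph{removes} $D_{4}$ from $Der_{\alpha^{0}}(\mathcal{W}^{q}_{\varphi_{0}})$ rather than keeping it. For $i=1$ the support $n+m=-1$ is off the diagonal, so antisymmetry cancels nothing: one finds $\varphi_{1}(D_{1}(L_{n}),G_{m})-\varphi_{1}(D_{1}(G_{m}),L_{n})=(n+m)b_{n}\delta_{n+m,-1}=-b_{n}\delta_{n+m,-1}$ and, for $D_{2}$, $+b_{n}\delta_{n+m,-1}$; the support equation becomes $(\lambda_{2}-\lambda_{1}-\mu)b_{n}=(\{-n\}-\{n\})\,v(G_{-1})$, which forces $\mu=\lambda_{2}-\lambda_{1}$, not $\mu=0$. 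So $D_{1}$ and $D_{2}$ extend to derivations of $\mathcal{W}^{q}_{\varphi_{1}}$ only with a nonzero action on the center, contradicting $D_{j}(1)=0$.

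The converse, which you set aside as ``the direct verification of both relations for each $D_{j}$ extended by $D_{j}(1)=0$'', in fact fails, which shows this is not merely an expository gap. In the Ramond algebra, writing $c$ for the central generator, $D_{4}(d(L_{2},G_{-3}))=(\{-2\}-\{2\})L_{0}$, whereas $d(D_{4}(L_{2}),G_{-3})+d(L_{2},D_{4}(G_{-3}))=d(L_{2},L_{-2})=(\{-2\}-\{2\})L_{0}+b_{2}\,c$ with $b_{2}=1$; hence $D_{4}$ is not an $\alpha^{0}$-derivation of $\mathcal{W}^{q}_{\varphi_{0}}$. You should be aware that the paper's own proof is silent at exactly the same spot: it passes from $\widetilde{f}\in\{\lambda_{1}D_{1}+\lambda_{2}D_{2},\lambda_{3}D_{3}+\lambda_{4}D_{4}\}$ to $v(\delta(L_{n},L_{k}))=0$ and $v(\delta(L_{k},G_{n}))=0$ without checking the $\widetilde{f}$- and $\widehat{v}$-contributions on the support, so your proposal reproduces, in more explicit form, the unproved cancellation on which the published argument rests. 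A careful run of your own scheme yields $\langle D_{1},D_{2},D_{3}\rangle$ for $i=0$ and, for $i=1$, a four-dimensional space in which the extensions of $D_{1}$ and $D_{2}$ act nontrivially on the center --- not the space asserted in the Proposition.
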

\begin{proof}
Using Lemma \ref{P4lemma} and           the first equation in \eqref{5derivationS},
 we obtain \[\widetilde{f}\in \langle D_{1}\rangle\oplus\langle D_{2}\rangle\oplus \langle D_{3}\rangle\oplus\langle D_{4}\rangle.\]
 If $\widetilde{f}$ is even, there exist $\lambda_{1}$ and $\lambda_{2}$ satisfying $\widetilde{f}=\lambda_{1}D_{1}+\lambda_{2}D_{2}$.\\
If $\widetilde{f}$ is odd, there exist $\lambda_{3}$ and $\lambda_{4}$ satisfying $\widetilde{f}=\lambda_{3}D_{3}+\lambda_{4}D_{4}$.\\
Using \eqref{5derivationS}, $\lambda\equiv0$ and $\widehat{f}\equiv0$, we obtain
\begin{align*}
&\Big(\big[\varphi,\widetilde{f}+\widehat{v}\big]+\big[\delta,v\big]\Big)(x,y)=0\\
&\Rightarrow \varphi(\widetilde{f}(x),\alpha(y))-(-1)^{|x||y|}\varphi(\widetilde{f}(y),\alpha(x))-\widehat{v}(\varphi(x,y))-v(\delta(x,y))=0.\\
&\Rightarrow \left\{ \begin{array}{ll}
\varphi(\widetilde{f}(L_n),\alpha(L_k))-\varphi(\widetilde{f}(L_k),\alpha(L_n))-\widehat{v}(\varphi(L_n,L_k))-v(\delta(L_n,L_k))=0,        \\
\varphi(\widetilde{f}(L_k),\alpha(G_n))-\varphi(\widetilde{f}(G_n),\alpha(L_k))-\widehat{v}(\varphi(L_k,G_n))-v(\delta(L_k,G_n))=0.
\end{array} \right.
\end{align*}
With $\varphi\in\{\varphi_0,\varphi_1\}$, we have :
\begin{align*}
&\varphi(L_1,x)=0,& \forall x\in \mathcal{W}^{q},\\
&\varphi(L_{n} ,L_k)=0,& \forall n+k\neq 0,\\
&\varphi(L_{n} ,G_k)=0,&\forall n+k\neq-1.
\end{align*}

Since $\widetilde{f}\in \{\lambda_{1}D_{1}+\lambda_{2}D_{2},\lambda_{3}D_{3}+\lambda_{4}D_{4}\}$, we obtain
\begin{equation*}
\left\{ \begin{array}{ll}
v(\delta(L_n,L_k))=0 ,       \\
v(\delta(L_k,G_n))=0.\\
\end{array} \right.
\end{equation*}
Thus, we can deduce
 $v\equiv0$ and $\widehat{v}\equiv0$
\end{proof}
\subsection{$2$-cocycles   of Extensions of   Hom-Lie superalgebras}
Let $f\in \mathcal{C}^{2}(K,K)$,  we set $f=\widetilde{f}+\widehat{f}+\overline{f}+v+\widehat{v}+\overline{v}$ where $\widetilde{f}\in \mathcal{C}^{2}(\mathcal{G},\mathcal{G})$,  $\widehat{f}\in \mathcal{C}^{1,1}(\mathcal{G}V,\mathcal{G})$, $\overline{f}
\in \mathcal{C}^{2}(V,\mathcal{G})$
 $v\in \mathcal{C}^{2}(\mathcal{G},V)$, $\widehat{v}\in \mathcal{C}^{1,1}(\mathcal{G}V,V)$ and $\overline{v} \in \mathcal{C}^{2}(V,V)$. 
In this case, we have $[d,f]=\delta_{K}$.
 For all $x,y, z\in\mathcal{G},\ u,v,w \in V$,  we have
\begin{align*}
&[d,f](x,y,z)=\Big([\delta,\widetilde{f}]+[\varphi,\widehat{f}]+[\delta+\lambda,v]+[\varphi,\widetilde{f}+\widehat{v}]\Big)(x,y,z)\\
&\Big([\delta,\widetilde{f}]+[\varphi,\widehat{f}]\Big)(x,y,z)\in \mathcal{G}\\
&\Big([\delta+\lambda,v]+[\varphi,\widetilde{f}+\widehat{v}]\Big)(x,y,z)\in V\\
&[d,f](x,y,w)=\Big([\delta,\widehat{f}+\widehat{v}]+[\varphi,\overline{f}]+[\lambda,\widetilde{f}+\widehat{f}+\widehat{v}]
+[\varphi,\widehat{f}+\widehat{v}+\overline{v}]\Big)(x,y,w)\\
&\Big(\big[\delta,\overline{f}\big]+
\Big( [\lambda,\widehat{f}+\overline{f}+\overline{v}]+[\varphi,\overline{f}+\overline{v}]\Big)(x,u,v)=0\\
&[d,f](u,v,w)=0.
\end{align*}
Therefore, we have the following result
\begin{thm}\label{f5derivation}
 For all $x,y,z\in\mathcal{G},\ u,\ v,\ w\in V$,  we have
\begin{equation}\label{5derivation}
f\ \in Z^2(K,K)   \Leftrightarrow
\left\{ \begin{array}{ll}
[\delta,\widetilde{f}]+[\varphi,\widehat{f}](x,y,z)=0,\\
\big[\delta+\lambda,v\big]+\big[\varphi,\widetilde{f}+\widehat{v}\big](x,y,z)=0,\\
\Big([\delta+\lambda,\widehat{f}]+\big[\varphi,\overline{f}\big]\Big)(x,y,v)=0,\\
\big[  \delta+\lambda,\widehat{v}\big]+[\lambda,\widetilde{f}]+[\varphi,\widehat{f}+\overline{v}\big](x,y,v)=0,\\
\big[\delta,\overline{f}\big](x,u,v)=0,\\
\Big( [\lambda,\widehat{f}+\overline{f}+\overline{v}]+[\varphi,\overline{f}+\overline{v}]\Big)(x,u,v)=0,\\
\big[\lambda,\overline{f}\big](u,v,w)=0.\\
\end{array} \right.
\end{equation}
\end{thm}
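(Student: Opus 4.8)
The proof rests on the identity $[d,f]=\delta_{K}(f)$ recorded just above the statement, by which $f\in Z^{2}(K,K)$ is equivalent to $[d,f]\equiv 0$ as an element of $\mathcal{C}^{3}(K,K)$. Since the bracket of cochains is bilinear and $d=\delta+\lambda+\varphi$, $f=\widetilde{f}+\widehat{f}+\overline{f}+v+\widehat{v}+\overline{v}$, the plan is first to expand $[d,f]$ into the sum of the eighteen brackets $[\mu,\nu]$ with $\mu\in\{\delta,\lambda,\varphi\}$ and $\nu$ a component of $f$, and then to organize these terms by type.

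The organizing principle is a bidegree bookkeeping. Each component carries a type recording how many of its arguments come from $\mathcal{G}$ and how many from $V$, and in which summand of $K=\mathcal{G}\oplus V$ its value lies: thus $\delta$ has type $(\mathcal{G}^{2}\to\mathcal{G})$, $\lambda$ has type $(\mathcal{G}V\to V)$, $\varphi$ has type $(\mathcal{G}^{2}\to V)$, and similarly for $\widetilde f,\widehat f,\overline f,v,\widehat v,\overline v$. Because $\circ$ plugs the value of one cochain into a slot of the other and fills the remaining slot with $\gamma$ of an argument, each bracket $[\mu,\nu]$ is again supported on a determined set of types; moreover the spaces $\mathcal{C}(\mathcal{G}^{i}V^{j},\mathcal{G})$ and $\mathcal{C}(\mathcal{G}^{i}V^{j},V)$ are independent summands of $\mathcal{C}^{3}(K,K)$. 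Hence $[d,f]\equiv 0$ holds if and only if its projection onto each such summand vanishes, which I would verify by evaluating on the four representative triples $(x,y,z)\in\mathcal{G}^{3}$, $(x,y,w)\in\mathcal{G}^{2}\times V$, $(x,u,v)\in\mathcal{G}\times V^{2}$, and $(u,v,w)\in V^{3}$, separating in each case the $\mathcal{G}$-valued from the $V$-valued part.

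Carrying this out, on each triple only the bracket terms whose input type matches survive, and these surviving terms are exactly the ones collected in the four displayed computations preceding the statement; reading off the $\mathcal{G}$- and $V$-components of each then yields the seven equations of \eqref{5derivation}. The eighth a priori slot, the $\mathcal{G}$-valued part on $V^{3}$, is vacuous: no composition built from $\delta,\lambda,\varphi$ and a component of $f$ can send three $V$-arguments to $\mathcal{G}$, since the only $d$-components producing a $V$-value, namely $\lambda$ and $\varphi$, each require at least one $\mathcal{G}$-argument.

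The step demanding real care is precisely this type-and-sign bookkeeping inside $\circ$, and two points are easy to mishandle. First, $\circ$ is a signed cyclic sum over the three arguments, so for a mixed triple one must track which cyclic terms actually contribute and carry along the signs $(-1)^{|X||Z|}$ and $(-1)^{|X|(|\psi|+|Z|)}$ from its definition. Second, because $\lambda=[\,\cdot\,,\,\cdot\,]_{V}$ is super-antisymmetric it acts through \emph{either} slot, so a term such as $\lambda(\gamma(X),\overline{f}(Y,Z))$ with $\gamma(X)\in V$ and $\overline{f}(Y,Z)\in\mathcal{G}$ is nonzero and is exactly what produces the $V^{3}$ equation $[\lambda,\overline{f}]\equiv 0$. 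Once these contributions are correctly attributed to their bidegree slots, the equivalence \eqref{5derivation} follows by matching components.
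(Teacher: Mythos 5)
Your proposal is correct and follows essentially the paper's own route: it rests on the identity $[d,f]=\delta_{K}(f)$, expands $[d,f]$ by bilinearity over $d=\delta+\lambda+\varphi$ and the six components of $f$, and sorts the resulting brackets by argument type ($\mathcal{G}^{3}$, $\mathcal{G}^{2}V$, $\mathcal{G}V^{2}$, $V^{3}$) and by $\mathcal{G}$-valued versus $V$-valued part, which is exactly the displayed computation the paper places immediately before the theorem. If anything, your handling of the $V^{3}$ case is slightly more careful than the paper's display (which writes $[d,f](u,v,w)=0$ outright), since you correctly identify the surviving term $\lambda\bigl(\gamma(X),\overline{f}(Y,Z)\bigr)$ that produces the seventh equation $[\lambda,\overline{f}](u,v,w)=0$.
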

\begin{cor}

\begin{enumerate}
\item If $f=\widetilde{f}$, then
\begin{equation}
f\ \in Z^2(K,K)   \Leftrightarrow
\left\{ \begin{array}{ll}
[\delta,\widetilde{f}](x,y,z)=0,\\
\big[\varphi,\widetilde{f}\big](x,y,z)=0,\\
\big[\lambda,\widetilde{f}\big](x,y,v)=0.\\
\end{array} \right.
\end{equation}
\item
If $f=v$, then
\begin{equation}\label{I5derivation}
(f\ \in Z^2(K,K))   \Leftrightarrow (f\ \in Z^2(\mathcal{G},V))\\
\end{equation}
\end{enumerate}
\end{cor}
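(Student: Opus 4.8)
The plan is to obtain both equivalences by specializing the general criterion of Theorem~\ref{f5derivation}. Recall that an arbitrary $f\in\mathcal{C}^2(K,K)$ decomposes as $f=\widetilde{f}+\widehat{f}+\overline{f}+v+\widehat{v}+\overline{v}$, and that $f\in Z^2(K,K)$ is equivalent to the seven conditions listed in \eqref{5derivation}. For each part I would substitute the hypothesized form of $f$, setting the remaining components to zero, and read off which of the seven conditions survive.

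For part (1), I would set $\widehat{f}=\overline{f}=v=\widehat{v}=\overline{v}\equiv 0$. Substituting into \eqref{5derivation}, every bracket containing a vanishing component drops out. The first condition collapses to $[\delta,\widetilde{f}](x,y,z)=0$; the second, since $[\delta+\lambda,v]$ and $[\varphi,\widehat{v}]$ both vanish, reduces to $[\varphi,\widetilde{f}](x,y,z)=0$; and the fourth, since $[\delta+\lambda,\widehat{v}]$ and $[\varphi,\widehat{f}+\overline{v}]$ vanish, reduces to $[\lambda,\widetilde{f}](x,y,v)=0$. The third, fifth, sixth, and seventh conditions are automatically satisfied, as each entry involves a component that has been set to zero. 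This produces exactly the three displayed equations.

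For part (2), I would instead set $\widetilde{f}=\widehat{f}=\overline{f}=\widehat{v}=\overline{v}\equiv 0$, so that $f=v$. Then six of the seven conditions in \eqref{5derivation} become trivial, and the sole surviving requirement is the second, which reduces to $[\delta+\lambda,v](x,y,z)=0$. The concluding step is to recognize that this equation is precisely the defining condition for $v\in Z^2(\mathcal{G},V)$: since the extension data is $d=\delta+\lambda+\varphi$ with $\delta$ the bracket on $\mathcal{G}$ and $\lambda$ the module action, the bracket $[\delta+\lambda,v]$ is by construction the coboundary of $v$ computed in the complex of $\mathcal{G}$ with values in the representation $(V,\lambda,\beta)$. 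Hence $v\in Z^2(K,K)$ if and only if $v\in Z^2(\mathcal{G},V)$.

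Both computations are mechanical substitutions; the only step requiring a moment's attention is the identification in part (2) of $[\delta+\lambda,v]=0$ with membership in $Z^2(\mathcal{G},V)$, which follows by matching the bracket $[\cdot,\cdot]$ against the coboundary formula~\eqref{def cobbb} restricted to cochains valued in $V$.
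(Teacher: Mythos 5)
Your proof is correct and follows exactly the route the paper intends: the corollary is stated without proof as an immediate specialization of Theorem~\ref{f5derivation}, and your mechanical substitution of the vanishing components into the seven conditions of \eqref{5derivation}, together with the identification of $[\delta+\lambda,v]$ with the coboundary of $v$ in the complex $C^{\bullet}(\mathcal{G},V)$, is precisely that argument spelled out.
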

\bigskip

Now, we assume  that $\mathcal{G}$ is $\mathbb{Z}$-graded and $deg(\varphi)=0$. Let $s=deg(\widetilde{f})=deg(\widehat{f})$.
 If $x\in \mathcal{G}_n$, we set  $x_n=x$.\\
In the first equation in \eqref{5derivation}, we have :
\begin{multicols}{2}
\begin{itemize}
 \item $deg([\delta,\widetilde{f}](x_n,x_m,x_p))=n+m+p+s,$
 \item $deg(\widehat{f}(\alpha(x_n),\varphi(x_m,x_p)))=n+s,$
  \item $deg(\widehat{f}(\alpha(x_m),\varphi(x_p,x_n)))=m+s,$
 \item $deg(\widehat{f}(\alpha(x_p),\varphi(x_n,x_m)))=p+s.$
\end{itemize}
\end{multicols}

In the third equation in \eqref{5derivation}, we have :
\begin{multicols}{2}
\begin{itemize}
 \item $deg([\delta,\widehat{f}])=n+m+s,$
  \item $deg(\widehat{f}(\alpha(x_m),\lambda(x_n,v)))=m+s,$
    \item $deg(\widehat{f}(\alpha(x_m),\lambda(x_n,v)))=n+s,$
     \item $deg(\bar{f} (\beta (v),\varphi (x_n,x_m)))=s$.
\end{itemize}
\end{multicols}
The other terms are of degree zero.
Then, we get
\begin{thm}\label{6derivation}
	If $f\ \in Z^2(K,K) $ and  $n\neq p$, $n\neq m$, $p\neq m$, we have : 

\begin{align}
&[\delta,\widetilde{f}](x_n,x_m,x_p)=0; \forall n+m\neq 0, n+p\neq 0, m+p\neq 0,\label{Vendredi}\\
&\big[\delta,\widetilde{f}](x_n,x_{-n},x_p)+ \widehat{f}(\alpha(x_p),\varphi(x_n,x_{-n}))=0,\label{Vendredi1}\\
 &\widehat{f}(\alpha(x_n),\varphi(x_m,x_p))=0, m+p\neq0,m\neq0,\\
& \big[\delta,\widehat{f}\big](x_n,x_m,u)=0\ \forall n+m\neq0,n\neq 0,  m\neq 0\label{6marsD},\\
 &\widehat{f}(\alpha(x_n),\lambda(x_m,u))=0   \ \forall n\neq m, n \neq 0, m\neq 0,\\
 & \Big[\delta,\widehat{f}](x_0,x_m,u)+\widehat{f}(\alpha(x_m),\lambda(u,x_0))=0, \forall m\neq0\label{6marsDi},\\
  &\overline{f}(\beta(u),\varphi(x_n,x_m))=0, \forall n+m\neq0,n\neq0, m\neq 0,\\
 & \delta(\alpha(x_n),\overline{f}(u,v))=0,\forall n\neq0.
\end{align}
\end{thm}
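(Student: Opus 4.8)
The plan is to run a pure homogeneity (degree-bookkeeping) argument on the three conditions of Theorem \ref{f5derivation} that actually involve $\widetilde f$, $\widehat f$ and $\overline f$, namely the first equation $[\delta,\widetilde f]+[\varphi,\widehat f]=0$, the third equation $[\delta+\lambda,\widehat f]+[\varphi,\overline f]=0$, and the fifth equation $[\delta,\overline f]=0$. Since $\mathcal G$ is $\mathbb Z$-graded with $\deg\varphi=0$ and $s=\deg\widetilde f=\deg\widehat f$, and $V$ sits in degree $0$, every summand occurring in these equations is $\mathbb Z$-homogeneous with exactly the degrees tabulated in the two \texttt{multicols} lists preceding the statement. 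The bracket $d$ respects the grading of $K=\mathcal G\oplus V$, so an identity $\sum_i T_i=0$ with homogeneous $T_i$ forces the vanishing of the partial sum of those $T_i$ sharing a common degree. The entire proof is then the systematic separation of each of the three equations into homogeneous components under the genericity hypothesis $n\neq m$, $n\neq p$, $p\neq m$.

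First I would evaluate the first equation on $(x_n,x_m,x_p)$. Here $[\delta,\widetilde f]$ lies in degree $n+m+p+s$, while the three terms of $[\varphi,\widehat f]$ of the form $\widehat f(\alpha(x_\bullet),\varphi(\cdot,\cdot))$ lie in degrees $n+s$, $m+s$, $p+s$ and are nonzero only when the relevant pair sums to $0$. If $n+m$, $n+p$, $m+p$ are all nonzero, the three $\widehat f$-terms vanish and the degree-$(n+m+p+s)$ component gives \eqref{Vendredi}. If exactly one pair sums to zero, say $m=-n$, the surviving term has degree $p+s=n+m+p+s$, so it couples with $[\delta,\widetilde f]$ and yields \eqref{Vendredi1}; meanwhile, whenever the degree $n+s$ of $\widehat f(\alpha(x_n),\varphi(x_m,x_p))$ is isolated (which under $n\neq m$, $n\neq p$ happens precisely when $m+p\neq0$) that single term must vanish, giving the third listed relation. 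The only subtlety is carrying the graded signs from the definitions of $\circ$ and $[\cdot,\cdot]$ so that the sign in \eqref{Vendredi1} comes out correctly.

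Next I would repeat this for the third equation on $(x_n,x_m,u)$ with $u\in V$: the four summands lie in degrees $n+m+s$ (the term $[\delta,\widehat f]$), $m+s$ and $n+s$ (the terms $\widehat f(\alpha(x_m),\lambda(x_n,u))$ and $\widehat f(\alpha(x_n),\lambda(x_m,u))$), and $s$ (the term $\overline f(\beta(u),\varphi(x_n,x_m))$, nonzero only for $n+m=0$). Under $n+m\neq0$, $n\neq0$, $m\neq0$, $n\neq m$ these four degrees are pairwise distinct, so each summand vanishes separately, producing \eqref{6marsD}, the relation $\widehat f(\alpha(x_n),\lambda(x_m,u))=0$, and $\overline f(\beta(u),\varphi(x_n,x_m))=0$; the resonant case $n=0$ collapses the degrees $n+m+s$ and $m+s$ onto one another and yields the coupled identity \eqref{6marsDi}. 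Finally, the fifth equation $[\delta,\overline f](x_n,u,v)=0$ isolates the degree-$(n+s)$ piece $\delta(\alpha(x_n),\overline f(u,v))$ for $n\neq0$, giving the last relation.

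The routine part is the exhaustive enumeration of degrees; the delicate part — and the only place a sign error can creep in — is pinning down precisely which resonance ($n+m=0$, $n+p=0$, $m+p=0$, or $n=0$) makes the top term of degree $n+m+p+s$ (respectively $n+m+s$) collapse onto a lower term, and then extracting the correct signs for the coupled equations \eqref{Vendredi1} and \eqref{6marsDi} from the graded commutator conventions fixed earlier in this section.
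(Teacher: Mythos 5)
Your proposal is correct and is essentially the paper's own argument: the paper proves Theorem \ref{6derivation} precisely by the degree bookkeeping you describe, tabulating the $\mathbb{Z}$-degrees of the terms in the first, third, and fifth conditions of Theorem \ref{f5derivation} and separating each cocycle identity into homogeneous components, with the resonances $n+m=0$ (resp. $n=0$) producing the coupled identities \eqref{Vendredi1} and \eqref{6marsDi}. Your write-up is in fact more explicit than the paper's, which states only the degree tables and concludes.
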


\subsection{Second cohomology  of   Ramond Hom-superalgebra}
 Let $f$ be an even $2$-cocycle of degree $s$. We can assume that
\begin{align*}
  & \tilde{f}(L_n,L_p)=a_{s,n,p}L_{s+n+p},\  \tilde{f} (L_n,G_p)=b_{s,n,p}G_{s+n+p},\  \tilde{f} (G_n,G_p)=c_{s,n,p}L_{s+n+p},\\
  &\hat{f} (1,L_p)=a_{s,p}'L_{s+p}  \textrm{ and } \hat{f} (1,G_p)=b_{s,p}'G_{s+p}.
\end{align*}
Using \eqref{6marsD}, $\lambda \equiv 0$ and \eqref{6marsDi}, we obtain the following equation
\begin{align}
(\{m\}-\{n\})a_{s,n+m}'
&=(\{m\}-\{n+s\})a_{s,n}'+(\{m+s\}-\{n\})a_{s,m}'.\label{hamza}
\end{align}
This equation was  solved in \cite{Saadaoui}. The solutions are given by $a_{s,n}'=0$ if $s\neq 0$ and 
$a_{0,n}'=na_{0,1}'$ if $s=0$.\\
As above, we obtain 
$b_{s,n}'=0$ if $s\neq 0$.
$b_{0,n}'=b_{0,0}'+na_{0,1}'$.\\
\begin{lem}\label{vendredi3}
 If $f$ is a  $2$-cocycle, we have 
 $$ \bar{v}\equiv0,\quad \hat{f} \equiv0, \quad \bar{f} \equiv0,\quad \text{ and } \quad\widehat{v}\equiv0.$$
\end{lem}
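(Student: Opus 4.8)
The plan is to analyse the six components of a $2$-cocycle $f=\widetilde{f}+\widehat{f}+\overline{f}+v+\widehat{v}+\overline{v}$ on $K=\mathcal{W}^{q}\oplus\C c$ one at a time, exploiting that here $V=\C c$ is one dimensional and, because we work with the \emph{even} cocycle $\varphi_{0}$, the central generator $c$ is even. The first observation is immediate: both $\overline{f}$ and $\overline{v}$ are $2$-cochains whose two arguments lie in $V$, so each is determined by its value on $(c,c)$, and super skew-symmetry gives $g(c,c)=-(-1)^{|c||c|}g(c,c)=-g(c,c)$ since $|c|=0$. Hence $\overline{f}(c,c)=\overline{v}(c,c)=0$, so $\overline{f}\equiv\overline{v}\equiv0$. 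This is the one step where the evenness of $c$ is essential, which is exactly why the reduction is special to the Ramond case.

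Next I would treat $\widehat{f}$. With $\overline{f}\equiv0$ and $\lambda\equiv0$, the third line of the cocycle system \eqref{5derivation} collapses to $[\delta,\widehat{f}]\equiv0$ on $\mathcal{G}^{2}V$, which is precisely \eqref{6marsD} and \eqref{6marsDi}. Writing $\widehat{f}(c,L_{p})=a'_{s,p}L_{s+p}$ and $\widehat{f}(c,G_{p})=b'_{s,p}G_{s+p}$, these yield the recursion \eqref{hamza}, already solved above: for $s\neq0$ all coefficients vanish, while for $s=0$ one is left with $a'_{0,p}=p\,a'_{0,1}$ and $b'_{0,p}=b'_{0,0}+p\,a'_{0,1}$, that is, a residual derivation-type term $a'_{0,1}D_{1}+b'_{0,0}D_{2}$. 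To remove this residue I would invoke the first line of \eqref{5derivation} at resonance, namely \eqref{Vendredi1}: since $\varphi_{0}(L_{n},L_{-n})=b_{n}c$ with $b_{n}\neq0$, it reads $[\delta,\widetilde{f}](L_{n},L_{-n},L_{p})=b_{n}(1+q^{p})\,p\,a'_{0,1}L_{p}$, with the analogous identity on the odd slot controlling $b'_{0,0}+p\,a'_{0,1}$. A comparison of the two sides as $n$ varies is what should pin down $a'_{0,1}=b'_{0,0}=0$, hence $\widehat{f}\equiv0$.

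Finally, with $\overline{v}\equiv0$ and $\widehat{f}\equiv0$ now in hand, the fourth line of \eqref{5derivation} reduces to $[\delta,\widehat{v}]\equiv0$ on $\mathcal{G}^{2}V$. Because $\widehat{v}$ takes values in the \emph{central} line $\C c$, every term of this bracket in which $\delta$ acts on an output of $\widehat{v}$ dies, and the bracket collapses to $-\widehat{v}([L_{n},L_{m}],c)=-(\{m\}-\{n\})\widehat{v}(L_{n+m},c)$, with the corresponding $-(\{m+1\}-\{n\})\widehat{v}(G_{n+m},c)$ on the mixed slot. Choosing indices with $\{m\}\neq\{n\}$ forces $\widehat{v}(L_{k},c)=\widehat{v}(G_{k},c)=0$ for every $k$, so $\widehat{v}\equiv0$.

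The routine parts are $\overline{f},\overline{v}$ (pure skew-symmetry) and $\widehat{v}$ (the centrality collapse, once $\widehat{f}$ is known to vanish). The genuine obstacle is the degree-zero part of $\widehat{f}$: off resonance the cochain equations only determine $\widehat{f}(c,\cdot)$ up to a derivation of $\mathcal{W}^{q}$, and eliminating it requires the resonant equation \eqref{Vendredi1} together with the non-degeneracy $b_{n}\neq0$ of $\varphi_{0}$. There $[\delta,\widetilde{f}]$ and $\widehat{f}$ are coupled, and disentangling the residual coefficients $a'_{0,1},b'_{0,0}$ from $\widetilde{f}$ by tracking the $n$-dependence is the delicate point of the argument.
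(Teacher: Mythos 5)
Your treatment of $\overline{f}$ and $\overline{v}$ (super skew-symmetry on the even central generator) and, once $\widehat{f}\equiv 0$ is available, of $\widehat{v}$ (centrality collapse plus the fact that brackets span $\mathcal{W}^{q}$) is correct and consistent with the paper. The problem is the middle step, which is the heart of the lemma, and there your argument has a genuine gap: you reduce $\widehat{f}$ to the degree-zero residue $a'_{0,1}D_{1}+b'_{0,0}D_{2}$ via the recursion \eqref{hamza}, and then assert that a ``comparison of the two sides as $n$ varies'' in the resonant equation \eqref{Vendredi1} \emph{should} force $a'_{0,1}=b'_{0,0}=0$. This is never carried out, and it is not a routine verification: \eqref{Vendredi1} couples $\widehat{f}$ to $[\delta,\widetilde{f}]$ at resonance, where $\widetilde{f}$ is unknown, so to conclude anything you would have to control $\widetilde{f}$ simultaneously. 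In the paper's logic the dependence runs the other way: Lemma \ref{vendredi3} is established first, and only then is \eqref{Vendredi1} used (in the proof of the theorem that follows) to deduce $[\delta,\widetilde{f}]=0$. Your proposed route therefore risks circularity, and in any case leaves the decisive step unproved.

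The idea you are missing is that the cocycle condition constrains $g=\widehat{f}(1,\cdot)$ by the \emph{$\alpha$-twisted} derivation identity, not the untwisted one. The paper evaluates $[d,f](x_{n},x_{m},1)=0$ and splits it into its $\mathcal{W}^{q}$-valued and $\C$-valued components, which must vanish separately; the $\mathcal{W}^{q}$-component reads
\[
-[\alpha(x_{n}),g(x_{m})]+[\alpha(x_{m}),g(x_{n})]+g([x_{n},x_{m}])=0 ,
\]
the twist by $\alpha$ arising because the cochain bracket is built with $\gamma=\alpha\oplus\mathrm{id}$. Thus $g$ is an $\alpha$-derivation of $\mathcal{W}^{q}$, and by \cite{Saadaoui} the space of $\alpha$-derivations --- unlike the four-dimensional space $Der_{\alpha^{0}}(\mathcal{W}^{q})$ of Lemma \ref{P4lemma} --- is trivial, so $g\equiv 0$ outright; the $\C$-component then gives $\widehat{v}(1,[x_{n},x_{m}])=0$, hence $\widehat{v}\equiv 0$. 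Note that $D_{1}$ and $D_{2}$ fail the twisted identity (for instance $D_{2}([L_{n},G_{m}])=(\{m+1\}-\{n\})G_{n+m}$, while $[\alpha(L_{n}),D_{2}(G_{m})]=(1+q^{n})(\{m+1\}-\{n\})G_{n+m}$), so the residue you are fighting never actually appears: it is an artifact of working with the untwisted recursion \eqref{hamza}, and once the twist is taken into account no resonance argument is needed at all.
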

\begin{proof}
Since $ \C$ is  continued in  even part of $HR$, $\forall z,\ z'\in \C$; we have 
 \[\bar{f}(z,z')=zz'\bar{f}(1,1)=0 \qquad  \text{ and }\qquad  \bar{v}(z,z')=zz'\bar{v}(1,1)=0.\]
  
Let $g=\hat{f}(1,.)$. If we assume $[d,f](x_n,x_m,1)=0$, we can deduce 
 \begin{eqnarray*}
  -[\alpha (x_n),g(x_m)]+[\alpha (x_m),g(x_n)]+g([x_n,x_m])-\varphi (\alpha (x_n),g(x_m))\\ +\varphi (\alpha (x_m),g(x_n))+\hat{v} ((1,[x_n,x_m])=0.
 \end{eqnarray*}

Since 
$[\alpha (x_n),g(x_m)]+[\alpha (x_m),g(x_n)]+g([x_n,x_m])\in \mathcal{W}^q$
 and\\ $ -\varphi (\alpha (x_n),g(x_m))+\varphi (\alpha (x_m),g(x_n))+\hat{v} ((1,[x_n,x_m])\in\C$, 
 we obtain  $$-[\alpha (x_n),g(x_m)]+[\alpha (x_m),g(x_n)]+g([x_n,x_m])=0.$$
Then $g$ is an $\alpha$-derivation.

 Recall that the set of $\alpha$-derivations is trivial (see\cite{Saadaoui}). Therefore
$g\equiv 0$ and $\hat{v}(1,.)\equiv 0$.  

\end{proof}

\begin{thm}
$H^2(HR,HR)=\C[\varphi_1] $.
\end{thm}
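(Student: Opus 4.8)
The plan is to compute $H^2(HR,HR)$ by exploiting the structural decomposition of a $2$-cocycle $f \in Z^2(K,K)$ for $K=HR=\mathcal{W}^q\oplus \C$ established in Theorem \ref{f5derivation}, combined with the strong degree constraints of Theorem \ref{6derivation} and the computations already carried out in Lemma \ref{vendredi3}. By Lemma \ref{vendredi3} we may assume from the outset that $\overline{v}\equiv 0$, $\widehat{f}\equiv 0$, $\overline{f}\equiv 0$ and $\widehat{v}\equiv 0$, so that any even (resp. odd) $2$-cocycle reduces to the form $f=\widetilde{f}+v$ with $\widetilde{f}\in \mathcal{C}^2(\mathcal{W}^q,\mathcal{W}^q)$ and $v\in \mathcal{C}^2(\mathcal{W}^q,\C)$. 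The system \eqref{5derivation} then collapses: the surviving conditions are $[\delta,\widetilde{f}]=0$, i.e. $\widetilde{f}\in Z^2(\mathcal{W}^q,\mathcal{W}^q)$, together with the coupling equation $[\delta,v]+[\varphi,\widetilde{f}]=0$ (using $\lambda\equiv 0$ and the vanishing of the hatted/barred parts).

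First I would treat the $\mathcal{W}^q$-valued part $\widetilde{f}$. The idea is to show that every such $\widetilde{f}\in Z^2(\mathcal{W}^q,\mathcal{W}^q)$ is in fact a coboundary, i.e. $H^2(\mathcal{W}^q,\mathcal{W}^q)=0$ (or at least that its contribution to $H^2(HR,HR)$ is killed), which should follow from the cohomology computations for $\mathcal{W}^q$ referenced in \cite{Saadaoui,SaadaouiN}. Using the grading and Theorem \ref{6derivation}, one writes $\widetilde{f}$ component-wise as in the ansatz preceding Lemma \ref{vendredi3}, with structure constants $a_{s,n,p}$, $b_{s,n,p}$, $c_{s,n,p}$, and solves the cocycle recursions degree by degree. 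The functional equation \eqref{hamza}, already solved in \cite{Saadaoui}, is the prototype: it forces the relevant constants to vanish unless $s=0$, and the $s=0$ solutions are precisely the ones realized by the derivations $D_1,\dots,D_4$ of Lemma \ref{P4lemma}, hence by coboundaries $\delta^1 g$. This should reduce $\widetilde{f}$ to a coboundary modulo the diagonal part that feeds into the second equation.

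Next I would analyze the coupling equation $[\delta,v]+[\varphi,\widetilde{f}]=0$, which is where the cocycle $\varphi\in\{\varphi_0,\varphi_1\}$ defining the extension enters. Here $v$ is a scalar-valued $2$-cochain on $\mathcal{W}^q$ and the term $[\varphi,\widetilde{f}]$ acts as an inhomogeneity driven by $\widetilde{f}$. Once $\widetilde{f}$ has been normalized to a coboundary, the equation becomes $[\delta,v]=0$, i.e. $v\in Z^2(\mathcal{W}^q,\C)$, so by Theorem \ref{saadaoui} we have $v\in \C\varphi_0\oplus\C\varphi_1$ modulo $B^2(\mathcal{W}^q,\C)$. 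The final bookkeeping is to determine which of these scalar classes survive as genuinely nontrivial classes in $H^2(HR,HR)$ and which are trivialized by the available coboundaries $\delta^1 h$ on $K$; the claim $H^2(HR,HR)=\C[\varphi_1]$ asserts that exactly the $\varphi_1$-direction remains, the $\varphi_0$-direction being absorbed (it is the cocycle already used to build $HR$ itself) and the even diagonal directions being coboundaries.

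The main obstacle I anticipate is the coupling equation: disentangling the interaction $[\varphi,\widetilde{f}]$ between the $\mathcal{W}^q$-part and the central part, and verifying precisely which scalar $2$-cocycles are forced to be coboundaries after the substitution. Concretely, one must check that the bracket $[\varphi,\widetilde{f}]$ does not produce obstructions to solving for $v$ except along one specific direction, and that the choice of $\varphi$ (here $\varphi_0$, since $HR$ is the Ramond algebra built from $\varphi_0$) interacts with the odd generators $G_n$ so as to leave only $[\varphi_1]$. This requires carefully matching degrees via Theorem \ref{6derivation} and handling the exceptional low-degree cases (those where $n+m=0$, $n=0$, or the $L_1$-type constraints noted after the display in the Ramond computation), which is exactly where the $b_n$-coefficients and the $\delta_{n+m,0}$, $\delta_{n+k,-1}$ supports of $\varphi_0,\varphi_1$ become decisive.
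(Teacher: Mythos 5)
Your proposal follows essentially the same route as the paper's proof: reduce $f$ to $\widetilde{f}+v$ via Lemma \ref{vendredi3}, invoke $H^2(\mathcal{W}^{q},\mathcal{W}^{q})=\{0\}$ from \cite{SaadaouiN} to absorb $\widetilde{f}$ into a coboundary, and then locate the remaining scalar part inside $H^2(\mathcal{W}^{q},\C)=\C[\varphi_{0}]\oplus\C[\varphi_{1}]$, with $\varphi_{0}$ dying precisely because it is the defining cocycle of the extension $HR$. The only point to tighten is the bookkeeping you flag yourself: one must subtract the full coboundary $\delta^{1}_{HR}(\widetilde{g})$ (whose central component shifts $v$ to a new scalar cochain $w$) before concluding $[\delta,w]=0$, which is exactly how the paper proceeds.
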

\begin{proof}
We have  $H^2(\mathcal{W}^{q},\mathcal{W}^{q})=\{0\}$ (see \cite{SaadaouiN}). Then\\
\begin{equation*}
\Big( \delta^2(\widetilde{f})\equiv0\Big)\Rightarrow \Big(\exists \widetilde{g}\in C^1_{\alpha,\alpha}(\mathcal{W}^{q},\mathcal{W}^{q}); \widetilde{f}=\delta^1(\widetilde{g})\Big).
\end{equation*}
Using \eqref{Vendredi}, \eqref{Vendredi1} and Lemma \ref{vendredi3}, we obtain $[\delta ,\tilde{f}]=0$. Then $ \tilde{f}\in Z^2(\mathcal{W}^{q},\mathcal{W}^{q})$.

We have  $\widetilde{f}=\delta^1(\widetilde{g})$, $\hat{f} \equiv0,$ $\bar{f} \equiv0, \quad \bar{v} \equiv0 \quad\text{ and } \quad\widehat{v}\equiv0.$ We deduce, $f=\delta^1(\widetilde{g})+v.$
Therefore, $f=\delta^1_{HR}(\widetilde{g})+w$ where $w(\mathcal{W}^{q},\mathcal{W}^{q})\subset\C$.\\
So
\begin{equation*}
\Big(f\in Z^2(HR,HR)\Big)\Rightarrow\Big( \delta_{HR}^2(w)\equiv0\Big)\Rightarrow \Big(\delta_{T}^2(w)\equiv0 )\Big).
\end{equation*}
As, $H^2(\mathcal{W}^{q},\C)=\C[\varphi_{0}]\oplus \C[\varphi_{1}]$ and $\varphi_0\in B^2_{HR}(\mathcal{W}^{q},\C)$, 
we deduce $w\in \C\varphi_1$.
\end{proof}


\subsection{Second cohomology  of  Special Ramond Hom-superalgebra}
\begin{lem}\label{vendredi4}
 If $f$ is a  $2$-cocycle, we have 
 \[\qquad \bar{v}\equiv0,\qquad \hat{f} \equiv0, \qquad \bar{f} \equiv0,\qquad \text{ and } \qquad\widehat{v}\equiv0.\]
\end{lem}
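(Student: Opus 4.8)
This lemma is the special-Ramond analogue of Lemma \ref{vendredi3}, so the guiding principle is to mirror that proof while accounting for the one structural difference: in $SHR$ the nontrivial cocycle is the \emph{odd} cocycle $\varphi_1$ rather than the even $\varphi_0$. The plan is to run the same argument but with $\deg(\varphi)=1$ fed into the degree bookkeeping, and to check that the decoupling of the $\mathcal{W}^q$-valued components from the central component still goes through.

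First I would dispose of the two components whose vanishing is purely formal. Since $\C$ sits inside the even part of $SHR$, for all $z,z'\in\C$ bilinearity gives $\bar f(z,z')=zz'\,\bar f(1,1)$ and $\bar v(z,z')=zz'\,\bar v(1,1)$; because $1$ is even and the bracket on $V=\C$ is zero, antisymmetry forces $\bar f(1,1)=0$ and $\bar v(1,1)=0$, so $\bar f\equiv0$ and $\bar v\equiv0$. This step does not depend on which $\varphi_i$ we use.

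Next I would treat $\hat f$ and $\hat v$ together, exactly as in Lemma \ref{vendredi3}. Setting $g=\hat f(1,\cdot)$ and expanding the cocycle condition $[d,f](x_n,x_m,1)=0$ splits into a $\mathcal{W}^q$-valued part and a $\C$-valued part. The $\mathcal{W}^q$-valued part reads
\[
-[\alpha(x_n),g(x_m)]+[\alpha(x_m),g(x_n)]+g([x_n,x_m])=0,
\]
which says precisely that $g$ is an $\alpha$-derivation of $\mathcal{W}^q$. Since the set of $\alpha^0$-derivations valued in $\C$ is trivial (by Lemma \ref{P4lemma1}, or equivalently the computation in \cite{Saadaoui}), we get $g\equiv0$, hence $\hat f\equiv0$, and the residual $\C$-valued part of the same equation then forces $\hat v(1,\cdot)\equiv0$, i.e.\ $\widehat v\equiv0$.

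The one place where I expect the special (odd) case to require genuine care — and thus the main obstacle — is verifying that the splitting of the cocycle equation into its $\mathcal{W}^q$-part and $\C$-part is still clean when $\varphi=\varphi_1$ is odd. The sign-tracking in the super Jacobiator picks up extra $(-1)^{|\cdot||\cdot|}$ factors because the parity of $\varphi_1$ shifts the $\Z_2$-grading (recall the special extension reverses the roles of the homogeneous parts, as noted in the Remark after Corollary). Concretely, the terms $-\varphi(\alpha(x_n),g(x_m))+\varphi(\alpha(x_m),g(x_n))+\hat v(1,[x_n,x_m])$ now carry parity-dependent signs, and I must confirm these land entirely in $\C$ so that the $\mathcal{W}^q$-valued equation isolating ``$g$ is a derivation'' is unaffected. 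Once the parity accounting is checked to preserve this decoupling, the derivation-triviality input applies verbatim and all four conclusions follow, completing the proof.
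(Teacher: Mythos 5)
Your opening step is where the proof breaks down, and it is precisely the point where the special Ramond case differs from Lemma \ref{vendredi3}. In $SHR$ the extension is built from the \emph{odd} cocycle $\varphi_1$, so by the Remark on special extensions the central line $\C$ lies in the \emph{odd} homogeneous part of $SHR$: the even part is $(\mathcal{W}^q)_0$ and the odd part is $(\mathcal{W}^q)_1\oplus\C$. Consequently super-skew-symmetry gives $\bar f(1,1)=-(-1)^{|1||1|}\bar f(1,1)=+\bar f(1,1)$, which is no constraint at all; nothing formal forces $\bar f(1,1)$ or $\bar v(1,1)$ to vanish. Your claim that this step ``does not depend on which $\varphi_i$ we use'' is exactly backwards: the parity of $\varphi$ determines the parity of the center, and the antisymmetry shortcut is available only in the Ramond case (which is why the paper uses it there and not here). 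You even invoke the grading-reversal Remark later in your sketch, but fail to apply it to this first step.

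The gap propagates into your treatment of $\hat f$ and $\hat v$. Because $\bar f$ and $\bar v$ cannot be discarded in advance, the expansion of $[d,f](x_n,x_m,1)=0$ contains the extra terms $\bar f(1,\varphi(x_n,x_m))$ in the $\mathcal{W}^q$-valued component and $\bar v(1,\varphi(x_n,x_m))$ in the $\C$-valued component, so the $\mathcal{W}^q$-part does \emph{not} directly say that $g=\hat f(1,\cdot)$ is an $\alpha$-derivation. The paper removes the obstruction by a degree argument: $\bar f(1,\varphi(x_n,x_m))$ has degree $s$ while the remaining terms have degree $n+m+s$, so the derivation identity holds whenever $n+m\neq 0$, and for $n+m=0$ one uses that $\varphi_1(x_n,x_{-n})=0$ (the odd cocycle is supported on pairs with $n+m=-1$). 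Only then does triviality of $\alpha$-derivations give $g\equiv 0$, which in turn forces $\bar f(1,\cdot)\equiv 0$ from the same equation; finally the $\C$-valued part reduces to $\hat v(1,[x_n,x_m])+\bar v(1,\varphi(x_n,x_m))=0$, and a second degree/support argument (taking $n+m=-1$, $s\neq 1$) kills $\hat v$ and $\bar v$. Note also a mis-citation: the input needed is the triviality of $\alpha$-derivations (the case $r=1$) of $\mathcal{W}^q$ from \cite{Saadaoui}, not of $\alpha^0$-derivations --- that space is four-dimensional by Lemma \ref{P4lemma} --- and not Lemma \ref{P4lemma1}, which concerns $\hat f$ for $1$-cochains of the extension rather than derivations of $\mathcal{W}^q$.
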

\begin{proof}
let $g=\hat{f}(1,.)$.
 \begin{align*}
&[d,f](x_n,x_m,1)=0\Rightarrow \\ &-[\alpha (x_n),g(x_m)]+[\alpha (x_m),g(x_n)]+g([x_n,x_m])+\bar{f}(1,\varphi (x_n,x_m))\\
&-\varphi (\alpha (x_n),g(x_m))+\varphi (\alpha (x_m),g(x_n))+\hat{v} (1,[x_n,x_m])+\bar{v}(1,\varphi (x_n,x_m)).
 \end{align*}

Since 
$ -[\alpha (x_n),g(x_m)]+[\alpha (x_m),g(x_n)]+g([x_n,x_m])+\bar{f}(1,\varphi (x_n,x_m))\in \mathcal{W}^q$\\ and $-\varphi (\alpha (x_n),g(x_m))+\varphi (\alpha (x_m),g(x_n))+\hat{v} ((1,[x_n,x_m])+\bar{v}(1,\varphi (x_n,x_m))\in\C$, 
we deduce 
\begin{align}
&-[\alpha (x_n),g(x_m)]+[\alpha (x_m),g(x_n)]+g([x_n,x_m])+\bar{f}(1,\varphi (x_n,x_m))=0.\label{dimanche}\\
 &-\varphi (\alpha (x_n),g(x_m))+\varphi (\alpha (x_m),g(x_n))+\hat{v} ((1,[x_n,x_m])+\bar{v}(1,\varphi (x_n,x_m))=0.\label{dimanche1}\\\nonumber
\end{align}
In \eqref{dimanche}, the term  $\bar{f}(1,\varphi (x_n,x_m))$ is of degree $s$. The other terms are of degree $n+m+s$. Then if $n+m\neq0$, we deduce 
\[-[\alpha (x_n),g(x_m)]+[\alpha (x_m),g(x_n)]+g([x_n,x_m])=0,\]
 If $n+m=0$,  we have $\varphi (x_n,x_m))=0$. Then \[-[\alpha (x_n),g(x_m)]+[\alpha (x_m),g(x_n)]+g([x_n,x_m])=0,\]
which implies that $g$ is an $\alpha$-derivation.
 Recall that the set of $\alpha$-derivations is trivial (see\cite{Saadaoui}).
We deduce $g\equiv 0$ and $\bar{f}(1,.)\equiv 0$.  

Since $g\equiv 0$, the equation \eqref{dimanche1} can be write \[\hat{v} ((1,[x_n,x_m])+\bar{v}(1,\varphi (x_n,x_m))=0.\]
If $n+m=-1$ and $s\neq1$, with $deg(\hat{v} ((1,[x_n,x_m]))=n+m+s$ and  $\hat{v} ((1,[x_n,x_m]))\in\C$,  we obtain $\hat{v} ((1,[x_n,x_m]))=0$.  Thus
 $\bar{v}\equiv0$ and $\hat{v}\equiv 0$.\\
\end{proof}
Then we  obtain the following result about second cohomology of special Ramond Hom-superalgebra.
\begin{thm}
\[H^2(SHR,SHR)=\C[\varphi_0] .\]
\end{thm}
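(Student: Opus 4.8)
The plan is to mirror, step for step, the argument just given for $HR$, interchanging the roles of the two scalar cocycles $\varphi_0$ and $\varphi_1$ of Theorem \ref{saadaoui}. First I would take an arbitrary $2$-cocycle $f$ of $SHR$, decompose it as $f=\widetilde{f}+\widehat{f}+\overline{f}+v+\widehat{v}+\overline{v}$ in the notation of Theorem \ref{f5derivation}, and invoke Lemma \ref{vendredi4}, which already supplies $\widehat{f}\equiv0$, $\overline{f}\equiv0$, $\widehat{v}\equiv0$ and $\overline{v}\equiv0$. This collapses the cocycle system \eqref{5derivation} to the two surviving blocks involving only $\widetilde{f}$ and $v$.

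Next I would show $\widetilde{f}\in Z^2(\mathcal{W}^q,\mathcal{W}^q)$. With $\widehat{f}\equiv0$ the first line of \eqref{5derivation} becomes $[\delta,\widetilde{f}]\equiv0$, which one also reads off from the graded pieces \eqref{Vendredi}--\eqref{Vendredi1} once the $\widehat{f}$-terms vanish. Since $H^2(\mathcal{W}^q,\mathcal{W}^q)=\{0\}$ (see \cite{SaadaouiN}), there exists $\widetilde{g}\in C^1_{\alpha,\alpha}(\mathcal{W}^q,\mathcal{W}^q)$ with $\widetilde{f}=\delta^1(\widetilde{g})$. Hence $f=\delta^1(\widetilde{g})+v$, and extending $\widetilde{g}$ by zero on $\C$ I may write $f=\delta^1_{SHR}(\widetilde{g})+w$, where $w$ is the scalar-valued part with $w(\mathcal{W}^q,\mathcal{W}^q)\subset\C$.

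Finally I would reduce to scalar cohomology: since $\delta^1_{SHR}(\widetilde{g})$ is a coboundary, the condition $\delta^2_{SHR}(f)\equiv0$ forces $\delta^2_{SHR}(w)\equiv0$, and because the bracket of $\mathcal{W}^q$ with $\C$ is trivial this coincides with $\delta^2_T(w)\equiv0$, i.e. $w\in Z^2(\mathcal{W}^q,\C)$. By Theorem \ref{saadaoui}, $[w]\in\C[\varphi_0]\oplus\C[\varphi_1]$. The decisive point — and the step I expect to be the main obstacle — is that here the defining cocycle of the extension is the \emph{odd} cocycle $\varphi_1$, so that $\varphi_1$ becomes a coboundary inside the self-cohomology of $SHR$, i.e. $\varphi_1\in B^2_{SHR}(\mathcal{W}^q,\C)$, exactly as $\varphi_0$ did for $HR$. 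Justifying this requires producing an explicit $1$-cochain trivializing $\varphi_1$ and, more delicately, carrying out the parity bookkeeping correctly: for the special (odd) extension the $\Z_2$-grading of $\mathcal{W}^q\oplus\C$ is twisted, so the central generator sits in the odd part and the signs throughout \eqref{5derivation} must be tracked accordingly. Granting this, the only surviving class is $w\in\C\varphi_0$, whence $H^2(SHR,SHR)=\C[\varphi_0]$.
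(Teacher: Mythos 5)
Your proposal is correct and is essentially the paper's own (implicit) argument: the paper gives no separate proof for $SHR$, but clearly intends the mirror of the $HR$ case, namely Lemma \ref{vendredi4}, the reduction of $\widetilde{f}$ via $H^2(\mathcal{W}^{q},\mathcal{W}^{q})=\{0\}$, the passage to a scalar-valued cochain $w$ with $\delta^2_T(w)\equiv 0$, and the observation that the defining cocycle of the extension dies in the self-cohomology. The step you flag as the main obstacle is in fact immediate and needs no delicate parity bookkeeping: take $h$ to be the projection of $SHR$ onto its center $\C c$ (an even map for the twisted grading, commuting with $\gamma$); since $h(X)$ and $h(Y)$ are central and $h(d(X,Y))=h([x,y]+\varphi_1(x,y)c)=\varphi_1(x,y)c$, formula \eqref{adjoint1} gives $\delta^1_{SHR}(h)(X,Y)=-\varphi_1(x,y)c$, so $\varphi_1\in B^2_{SHR}(\mathcal{W}^{q},\C)$ exactly as $\varphi_0\in B^2_{HR}(\mathcal{W}^{q},\C)$ in the Ramond case.
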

\section{Deformations of    Virasoro Hom-superalgebras}
In this section, we discuss deformations of Ramond Hom-superalgebra and special Ramond Hom-superalgebra.
 \begin{defn}
Let $( \mathcal{G},[.,.]_0,\alpha_0)$ be a  Hom-Lie superalgebra. A one-parameter formal   deformation of $\mathcal{G}$ is given by the $\mathbb{K}[[t]]$-bilinear  and the $\mathbb{K}[[t]]$-linear maps $[.,.]_{t}:\mathcal{G}[[t]]\times\mathcal{G}[[t]]\longrightarrow\mathcal{G}[[t]]$, $\alpha_{t}:\mathcal{G}[[t]]\longrightarrow\mathcal{G}[[t]]$ of the form
\[\displaystyle [.,.]_{t}=\sum_{i\geq 0} t^i[.,.]_{i}\ \text{and}\ \alpha_t=\sum_{i\geq 0}t^i \alpha_{i} ,\]
where each $[.,.]_{i}$ is an even $\mathbb{K}$-bilinear  map $[.,.]_{i}:\mathcal{G}\times\mathcal{G}\longrightarrow\mathcal{G}$ (extended to be $\mathbb{K}[[t]]$-bilinear) and each $\alpha_{i}$ is  an even $\mathbb{K}$-linear map $\alpha_{i}:\mathcal{G}\longrightarrow\mathcal{G}$ (extended to be $\mathbb{K}[[t]]$-linear),
 and satisfying the following conditions
\begin{eqnarray}
&&[x,y]_{t}=-(-1)^{|x||y|}[y,x]_{t},\\
&&\circlearrowleft_{x,y,z}(-1)^{|x||z|}[\alpha_{t}(x),[y,z]_{t}]_{t}=0.\label{Djacobie}
\end{eqnarray}
\end{defn}
\begin{defn}
Let $( \mathcal{G},[.,.]_{0},\alpha_{0})$ be a  Hom-Lie superalgebra. Given two deformations
 $\mathcal{G}_{t}=( \mathcal{G},[.,.]_{t},\alpha_{t})$ and  $\mathcal{G}_{t}'= (\mathcal{G},[.,.]'_{t,},\alpha_{t}')$ of $\mathcal{G}$,  where \[\displaystyle [.,.]_{t}=\sum_{i\geq0}t^i[.,.]_{i},\
[.,.]_{t}'=\sum_{i\geq0}t^i[.,.]_{i}',\ \displaystyle \alpha_{t}=\sum_{i\geq0}t^i\alpha_{i}\textit{ and } \displaystyle \alpha_{t}'=\sum_{i\geq0}t^i\alpha_{i}'.\]
 We say that they
 are equivalent if there exists a formal automorphism
 $\displaystyle \phi_{t}=\sum_{i\geq0} t^i \phi_{i}$,  where $\phi_{i}\in \Big(End (\mathcal{G})\Big)_0$ and 
 $\phi_{0}=id_{\mathcal{G}}$,
 such that
 \begin{eqnarray}\label{gabes12}
  & \phi_{t}([x,y]_{t})=[\phi_{t}(x),\phi_{t}(y)]_{t}',\ \forall x,\ y\in \mathcal{G},\\
   &\text{and } \  \phi_{t}\circ \alpha_t=\alpha'_t\circ  \phi_{t}.
  \end{eqnarray}
   A deformation $\mathcal{G}_{t}$ is said to be trivial if and only if  $\mathcal{G}_{t}$ is equivalent to $\mathcal{G}$ (viewed as a superalgebra on $\mathcal{G}[[t]]$).
    \end{defn}
  
\begin{lem}\label{M} 
Every deformation $ HR_{t }$ of Ramond Hom-superalgebra  such that $\displaystyle [.,.]'_{t}=[.,.]_{0}+\sum_{k\geq p} t^i[.,.]'_{k} $ and $\displaystyle  \alpha_{t}=\displaystyle( \sum_{k\geq 0} a_kt^k)\alpha_0$ is equivalent to a deformation $$\displaystyle[.,.]_t=[.,.]_0+(\sum_{k\geq  p}\lambda_kt^k)\varphi_1.$$
\end{lem}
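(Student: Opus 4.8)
The plan is to run the standard cohomological normal-form reduction for formal deformations, taking as the essential input the computation $H^2(HR,HR)=\C[\varphi_1]$ established above. First I would dispose of the deformation of the twisting map. Since $\alpha_t=c(t)\alpha_0$ with $c(t)=\sum_{k\geq0}a_kt^k$ and $c(0)=a_0=1$ invertible in $\C[[t]]$, the even scalar factor $c(t)$ can be pulled out of the Hom-super-Jacobi identity \eqref{Djacobie} by $\C[[t]]$-bilinearity of $[.,.]_t$; dividing by the unit $c(t)$ shows that $([.,.]_t,\alpha_t)$ satisfies \eqref{Djacobie} if and only if $[.,.]_t$ satisfies the same identity with the fixed twisting map $\alpha_0$. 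Hence there is no loss in analysing the bracket deformation over the constant structure map $\alpha_0$, and the equivalences \eqref{gabes12} constructed below will automatically intertwine the twisting maps, since each will commute with $\alpha_0$.

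Next I would extract the leading obstruction. Writing $[.,.]_t=[.,.]_0+t^pC_p+t^{p+1}C_{p+1}+\cdots$ with $C_p\neq0$ the first perturbing term, I collect the coefficient of $t^p$ in \eqref{Djacobie}. Because $[.,.]_0$ satisfies the Jacobi identity and all lower perturbations vanish, this coefficient is exactly $\delta^2_{HR}(C_p)$ computed from \eqref{adjoint2}, so $C_p\in Z^2(HR,HR)$. Invoking $H^2(HR,HR)=\C[\varphi_1]$, I may write $C_p=\lambda_p\varphi_1+\delta^1_{HR}(g_p)$ for some scalar $\lambda_p$ and some $1$-cochain $g_p\in C^1_{\gamma,\gamma}(HR,HR)$, where $\delta^1_{HR}$ is the coboundary \eqref{adjoint1}. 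The formal map $\phi_t=\mathrm{id}+t^pg_p$ is invertible over $\C[[t]]$, restricts to $\mathrm{id}$ modulo $t^p$, and by the equivalence relations \eqref{gabes12} transports the deformation to an equivalent one whose order-$p$ term is $\lambda_p\varphi_1$, while all terms of order $<p$ are unchanged.

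I would then iterate this reduction by induction on the order. Assume inductively that, after composing finitely many such automorphisms, the bracket has the form $[.,.]_0+\big(\sum_{p\leq k<N}\lambda_kt^k\big)\varphi_1+t^NC_N+\cdots$. The crucial observation is that the already-normalized terms contribute nothing obstructive to the order-$N$ coefficient of \eqref{Djacobie}: since $\varphi_1$ takes values in the one-dimensional central part $\C$ of $HR$, every iterated bracket that passes twice through $\varphi_1$ vanishes, so the order-$N$ coefficient again reduces to $\delta^2_{HR}(C_N)$. Thus $C_N\in Z^2(HR,HR)$, and as before $C_N=\lambda_N\varphi_1+\delta^1_{HR}(g_N)$; applying $\mathrm{id}+t^Ng_N$ removes the coboundary part without disturbing the lower orders. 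Because these automorphisms are successively congruent to the identity modulo higher powers of $t$, their composite converges in the $t$-adic topology on $HR[[t]]$ to a genuine formal automorphism $\phi_t$ carrying the original deformation to $[.,.]_0+\big(\sum_{k\geq p}\lambda_kt^k\big)\varphi_1$.

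The main obstacle I anticipate is the inductive verification that the order-$N$ obstruction is a true $2$-cocycle: one must expand \eqref{Djacobie} carefully and confirm that the cross terms between $[.,.]_0$ and the accumulated $\varphi_1$-contributions cancel, which rests on the centrality of the $\varphi_1$-direction and on the graded super-sign bookkeeping --- the latter being slightly delicate because $\varphi_1$ is the odd generator of $H^2(HR,HR)$. Once this cocycle property is secured at every stage, the one-dimensionality $H^2(HR,HR)=\C[\varphi_1]$ forces each obstruction into the line $\C\varphi_1$ modulo coboundaries, and the automorphisms absorb precisely the coboundary parts, yielding the claimed normal form.
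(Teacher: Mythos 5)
Your proof is correct and follows essentially the same route as the paper's: extract the order-$p$ coefficient of the Hom-Jacobi identity to see that the leading perturbation is a $2$-cocycle, invoke $H^2(HR,HR)=\C[\varphi_1]$ to write it as $\lambda_p\varphi_1$ plus a coboundary $\delta^1(g_p)$ with $g_p\in C^1_{\gamma,\gamma}$, absorb the coboundary by the formal automorphism $\mathrm{id}+t^p g_p$, and iterate. Your write-up is in fact slightly more complete than the paper's at the two points it leaves implicit, namely the reduction of $\alpha_t=c(t)\alpha_0$ to $\alpha_0$ by dividing the Jacobi identity by the unit $c(t)$, and the centrality argument showing that cross-terms among the already-normalized $\varphi_1$-contributions do not obstruct the induction.
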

\begin{proof}
Let   $HR_{t}=( HR,[.,.]'_{t},\alpha_{t})$ be  a deformation of Ramond Hom-superalgebra   $( HR,[.,.]_{0},\alpha_{0})$, where \[\displaystyle [.,.]'_{t}=[.,.]_{0}+\sum_{k\geq p}[.,.]'_{k}t^k\qquad \textit{ and }\qquad   \alpha_{t}=\displaystyle( \sum_{k\geq 0}a_kt^k)\alpha_{0}.\]
Condition \eqref{Djacobie} may be written
 \begin{equation}
\circlearrowleft_{x,y,z}(-1)^{|x||z|}\sum_{s\geq0}t^s\Big(\sum_{k=0}^{s}\sum_{i=0}^{s-k}[\alpha_{i}(x),[y,z]'_{k}]'_{s-i-k}\Big)=0.
  \end{equation}
  This equation is equivalent to the following infinite system :
   \begin{equation}\label{2016}
\circlearrowleft_{x,y,z}(-1)^{|x||z|}\sum_{k=0}^{s}\sum_{i=0}^{s-k}[\alpha_{i}(x),[y,z]'_{k}]'_{s-i-k}=0,\ s=0,1,\dots
  \end{equation}
In particular, 
  for $s=p$, $$\circlearrowleft_{x,y,z}(-1)^{|x||z|}[\alpha_{0}(x),[y,z]_{0}]'_{p}+\circlearrowleft_{x,y,z}(-1)^{|x||z|}[\alpha_{0}(x),[y,z]_{p}']_{0}=0.$$
  Therefore $[.,.]'_p\in Z^2(HR,HR)$. 
  Since $H^2(HR,HR)=\C[\varphi_1]$, 
we deduce  $$[.,.]'_p=-\delta^1(\Phi)+\lambda_{p}\varphi_1,\ \text{ where } \ \Phi\in C^1_{\gamma ,\gamma }, \text{ and } \lambda\in\K .  $$
Let $\Phi_{t}=\Phi_{0}+\Phi t^p$, then $\Phi_{t}^{-1}=\Phi_{0}+\displaystyle \sum_{k\geq 1} (-1)^kt^{kp}\Phi^k$.\\
$[x,y]_{t}=\Phi_{t}^{-1}\Big([\Phi_{t}(x),\Phi_{t}(y)]'_{t}\Big)$.\\
By a simple identification, it follows that $[x,y]_{p}=\lambda_p \varphi_1$. \\
As well $$\displaystyle[.,.]_t=[.,.]_0+t^p\lambda_{p}\varphi_1+\sum_{k> p}t^k[.,.]_k.$$
Thus, by induction we show that
$   [.,.]_{k}=\lambda_{k}\varphi_{1}\ \quad \forall k\geq p.$
\end{proof}
\begin{thm}\label{M2}
Every deformation $ HR_{t }$ such that $\displaystyle [.,.]'_{t}=[.,.]_{0}+\sum_{k\geq p} t^i[.,.]'_{k} $ and $\displaystyle  \alpha_{t}=\displaystyle(\sum_{k\geq 0} a_kt^k)\alpha_0 $ of Ramond Hom-superalgebra is equivalent to a deformation  of the form $$\displaystyle[.,.]_t=[.,.]+\varphi_0+ t \varphi_1.$$
\end{thm}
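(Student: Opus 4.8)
The plan is to build directly on Lemma~\ref{M}, which already performs the substantive reduction, and then to carry out a single normalization of one scalar series. Recall that the Ramond bracket decomposes as $[.,.]_0 = [.,.] + \varphi_0$, where $[.,.]$ is the underlying $q$-Witt bracket and $\varphi_0$ is the central term defining $HR$. By Lemma~\ref{M} the given deformation is equivalent to one whose bracket is $[.,.]_0 + \big(\sum_{k\geq p}\lambda_k t^k\big)\varphi_1$ and whose twist is the rescaling $\alpha_t = \big(\sum_{k\geq 0} a_k t^k\big)\alpha_0$; rewriting $[.,.]_0$ this reads
\[
[.,.]_t = [.,.] + \varphi_0 + \Big(\sum_{k\geq p}\lambda_k t^k\Big)\varphi_1 .
\]
Thus the only remaining task is to normalize the scalar series $\sum_{k\geq p}\lambda_k t^k$ to the single monomial $t$.

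The structural fact I would isolate first is that $\varphi_1$ takes values in the central summand $\C$, so that $[\varphi_1,\varphi_1]\equiv 0$ and every bracket $[.,.]_0 + g(t)\,\varphi_1$ with $g(0)=0$ automatically satisfies the graded Jacobi identity \eqref{Djacobie}: there are no higher obstructions to integrate and the coefficients $\lambda_k$ are free. This reduces the statement to a normalization question about the one scalar series $g$, to be achieved by an equivalence $\phi_t = \mathrm{id} + \sum_{i\geq 1} t^i\phi_i$ together with the permitted change of the formal parameter, subject to the compatibility conditions \eqref{gabes12} with the prescribed $\alpha_t$.

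Finally I would carry out the normalization. For a nontrivial deformation the leading coefficient $\lambda_p$ is nonzero; in the principal case $p=1$ the series $s = \sum_{k\geq 1}\lambda_k t^k$ is an invertible formal reparametrization, and re-expressing the bracket in the new parameter turns it into $[.,.] + \varphi_0 + s\,\varphi_1$, which after renaming $s$ as $t$ is exactly the asserted normal form. The main obstacle is precisely this normalization step: one must verify that the reparametrization, together with the accompanying $\phi_t$, can be chosen to respect \eqref{gabes12} for $\alpha_t = (\sum_k a_k t^k)\alpha_0$, and that the centrality of $\varphi_1$ is what actually lets the higher terms $\sum_{k>p}\lambda_k t^k$ be absorbed rather than merely shifted. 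The degenerate subtlety of a leading order $p>1$ is then handled by observing that, since the whole deformation lives along the single one-dimensional class $\C[\varphi_1]=H^2(HR,HR)$, its equivalence class is determined by this leading term alone, so that $[.,.] + \varphi_0 + t\,\varphi_1$ exhausts the class.
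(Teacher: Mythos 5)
Your first step---invoking Lemma~\ref{M} to reduce the deformation to a bracket of the form $[.,.]+\varphi_0+g(t)\varphi_1$ with $g(t)=\sum_{k\geq p}\lambda_k t^k$, together with the observation that any such bracket satisfies the Hom--Jacobi identity because $\varphi_1$ is a central-valued scalar cocycle---agrees with the paper. The genuine gap is in the normalization of $g(t)$ to $t$, which is the entire content of Theorem~\ref{M2} beyond Lemma~\ref{M}. In the paper's definition, two deformations are equivalent only through a formal automorphism $\phi_t=\sum_{i\geq 0}t^i\phi_i$ with $\phi_0=\mathrm{id}$ satisfying \eqref{gabes12}; there is no provision for changing the formal parameter. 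Hence ``re-expressing the bracket in the new parameter $s=\sum_{k\geq 1}\lambda_k t^k$ and renaming $s$ as $t$'' is not an admissible equivalence, and although you flag that an ``accompanying $\phi_t$'' must be produced, you never construct one, so the decisive step is missing. The paper closes exactly this gap with an explicit automorphism: $\Phi_t$ fixes every $L_n$ and the central element and rescales the odd generators, $\Phi_s(G_m)=\frac{a_{s+1}}{\lambda}G_m$, i.e.\ $\Phi_t(G_m)=\frac{g(t)}{\lambda t}G_m$ with $\lambda=\lambda_1$. Since $\varphi_1$ enters only the bracket $[L_n,G_m]$, in parallel with the term $(\{m+1\}-\{n\})G_{n+m}$, multiplying all $G_m$ by the unit series $g(t)/(\lambda t)$ rescales these two terms coherently, and the condition $\Phi_t([L_n,G_m]_t)=[\Phi_t(L_n),\Phi_t(G_m)]'_t$ reduces to the identity $g(t)=\lambda t\cdot \frac{g(t)}{\lambda t}$; a final rescaling (e.g.\ $G_m\mapsto \lambda^{-1}G_m$) removes $\lambda$. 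This construction is what your write-up lacks.

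Your treatment of the case $p>1$ is moreover incorrect rather than merely incomplete. One-dimensionality of $H^2(HR,HR)$ does not imply that the equivalence class of a deformation is determined by its leading cohomology class independently of the order at which it appears. Comparing the coefficients of $t^1$ in \eqref{gabes12}: if $[.,.]_t$ has vanishing $t^1$-term (which happens when $p\geq 2$) while $[.,.]'_t=[.,.]_0+t\varphi_1+\cdots$, one obtains $\varphi_1(x,y)=\phi_1([x,y]_0)-[\phi_1(x),y]_0-[x,\phi_1(y)]_0=-\delta^1(\phi_1)(x,y)$ (and the order-$t$ part of $\phi_t\circ\alpha_t=\alpha'_t\circ\phi_t$ forces $\phi_1$ to commute with $\gamma$), i.e.\ $\varphi_1\in B^2(HR,HR)$, contradicting $H^2(HR,HR)=\C[\varphi_1]$. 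So a deformation whose first nonvanishing term is $\lambda_p\varphi_1 t^p$ with $p\geq 2$ is never equivalent to the normal form $[.,.]+\varphi_0+t\varphi_1$; nor can reparametrization rescue this, since an invertible substitution $t\mapsto c_1t+c_2t^2+\cdots$ ($c_1\neq 0$) preserves the order of the leading term. The statement (and the paper's proof, where the coefficient $\lambda$ of $t$ is implicitly nonzero) really concerns the case $p=1$, $\lambda_1\neq 0$; your argument should restrict to that case rather than absorb $p>1$ via the $H^2$ appeal.
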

\begin{proof}
It follows from 
\begin{align*}
&[L_n,G_m]_t=[L_n,G_m]+\sum_{i\geq p}a_i\varphi_1(L_n,G_m)t^i;\\
&[L_n,L_m]_t=[L_n,L_m]+\varphi_0(L_n,L_m);\\
&[L_n,G_m]_t'=[L_n,G_m]+\lambda\varphi_1(L_n,G_m)t;\\
&[L_n,L_m]_t'=[L_n,L_m]+\varphi_0(L_n,L_m).
\end{align*}
$\Phi_0=id;$
$\Phi_s(1)=0,\ \forall s>0;$
$\Phi_s(G_m)=\frac{a_{s+1}}{\lambda}G_m,\quad \forall s\geq p;$
$\Phi_s(L_n)=0,\quad \forall s>0.$
Thus $\displaystyle[.,.]_t=[.,.]+\varphi_0+\lambda t \varphi_1.$ By rescaling we obtain the desired result.
\end{proof}
\begin{thm}\label{M3} 
Every deformation $ SHR_{t }$ such that  $\displaystyle[.,.]'_{t}=[.,.]_{0}+\sum_{k\geq p}t^k[.,.]'_{k}$ and $\displaystyle  \alpha_{t}=\displaystyle  (\sum_{k\geq 0}a_kt^k)\alpha_0$ of special Ramond Hom-superalgebra is equivalent to a deformation
 $$\displaystyle[.,.]_{t}=[.,.]+\varphi_1+ t \varphi_0.$$
\end{thm}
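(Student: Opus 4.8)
The plan is to follow the proof of Theorem \ref{M2} with the roles of $\varphi_0$ and $\varphi_1$ interchanged, using $H^2(SHR,SHR)=\C[\varphi_0]$ in place of $H^2(HR,HR)=\C[\varphi_1]$.

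First I would establish the special-Ramond analogue of Lemma \ref{M}; its proof carries over word for word. Expanding the Hom-Jacobi condition \eqref{Djacobie} order by order as in \eqref{2016} forces the lowest-order deformation term $[.,.]'_p$ into $Z^2(SHR,SHR)$, and since $H^2(SHR,SHR)=\C[\varphi_0]$ one writes $[.,.]'_p=-\delta^1(\Phi)+\lambda_p\varphi_0$; removing the coboundary by the automorphism $\mathrm{id}+\Phi t^p$ and inducting on the order shows that the given deformation is equivalent to one whose bracket is $[.,.]_0+\mu(t)\varphi_0$, with $\mu(t)=\sum_{k\geq p}\lambda_k t^k$ and twist $\alpha_t=\big(\sum_{k\geq 0}a_k t^k\big)\alpha_0$.

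Next I would record the action on generators. Because the special-Ramond base bracket carries the odd cocycle $\varphi_1$ on the $(L,G)$-sector while $\varphi_0$ is supported on the $(L,L)$-sector, the equivalent deformation reads
\begin{align*}
&[L_n,L_m]_t=[L_n,L_m]+\mu(t)\,\varphi_0(L_n,L_m),\\
&[L_n,G_m]_t=[L_n,G_m]+\varphi_1(L_n,G_m).
\end{align*}
Substituting into \eqref{2016} and matching the central $\varphi_0$-terms couples the coefficients $\lambda_i$ to the twist coefficients $a_i$, fixing them up to the single free scalar $\lambda$ attached to the direction $\C[\varphi_0]$. Finally I would construct the normalizing automorphism: since source and target share the twist family $a(t)\alpha_0$, the intertwining $\Phi_t\circ\alpha_t=\alpha'_t\circ\Phi_t$ forces $\Phi_t$ to be diagonal in the $\alpha_0$-eigenbasis, and compatibility with the undeformed Witt bracket then gives $\Phi_t(L_n)=\ell(t)^nL_n$, $\Phi_t(G_m)=\ell(t)^m g(t)G_m$, with $\Phi_t$ vanishing on the centre in positive degrees. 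Imposing $\Phi_t([x,y]_t)=[\Phi_t x,\Phi_t y]'_t$ yields one scalar identity from the $n+m=0$ part (matching $\varphi_0$) and one from the $n+m=-1$ part (matching $\varphi_1$); together they let one choose $\ell(t),g(t)$ so as to collapse $\mu(t)$ onto its leading term, after which a rescaling of $t$ normalizes it and produces $[.,.]+\varphi_1+t\varphi_0$.

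The main obstacle is this last step. In the Ramond case the odd cocycle $\varphi_1$ lives on $[L,G]$, so scaling the odd generators $G_m$ rescales its central value through the $G_{n+m}$ appearing there while leaving the even $\varphi_0$ on $[L,L]$ untouched—an immediate handle. For $SHR$ both $\varphi_0$ and $\varphi_1$ take values in the same one-dimensional centre, so no single scaling isolates $\varphi_0$; one must verify that the diagonal $\Phi_t$ above simultaneously commutes with $a(t)\alpha_0$, fixes the base cocycle $\varphi_1$, and collapses $\mu(t)$, and that the resulting factors $\ell(t),g(t)$ are genuine invertible formal series equal to $1$ at $t=0$. The twisted grading of the special extension (with $\varphi_1$ odd) demands careful sign bookkeeping throughout, and this is the technical heart of the argument.
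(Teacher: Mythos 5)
Your first step (the $SHR$ analogue of Lemma \ref{M}) is correct and is exactly what the paper intends by ``the same way as Theorem \ref{M2}'': the lowest-order term of the deformation lies in $Z^2(SHR,SHR)$, the identification $H^2(SHR,SHR)=\C[\varphi_0]$ lets you write it as $-\delta^1(\Phi)+\lambda_p\varphi_0$, and the automorphism $\mathrm{id}+\Phi t^p$ together with induction reduces the bracket to $[.,.]+\varphi_1+\mu(t)\varphi_0$ with $\mu(t)=\sum_{k\geq p}\lambda_k t^k$. The genuine gap is in your normalization step. The family of automorphisms you allow, namely $\Phi_t(L_n)=\ell(t)^nL_n$, $\Phi_t(G_m)=\ell(t)^m g(t)G_m$ with the centre fixed (``vanishing on the centre in positive degrees''), provably cannot change $\mu(t)$ at all. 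Indeed, $\varphi_0$ is supported on pairs $(L_n,L_{-n})$ of total degree zero, so in the equation $\Phi_t([L_n,L_{-n}]_t)=[\Phi_t L_n,\Phi_t L_{-n}]'_t$ the graded factors cancel, $\ell(t)^n\ell(t)^{-n}=1$, and because $\Phi_t$ fixes the central element both central terms are untouched: the $n+m=0$ identity reads $\mu(t)b_n=\mu'(t)b_n$, i.e.\ $\mu=\mu'$, while the $n+m=-1$ identity merely forces $g(t)=\ell(t)$. So your two ``scalar identities'' leave no freedom whatsoever, and the collapse of $\mu(t)$ onto its leading term never happens.

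The missing idea is that for $SHR$ one must rescale the \emph{central element itself}, compensating on the odd generators so as not to disturb the $\varphi_1$-part of the base bracket. Take
\begin{equation*}
\Phi_t(L_n)=L_n,\qquad \Phi_t(G_m)=\kappa(t)G_m,\qquad \Phi_t(c)=\kappa(t)c,\qquad \kappa(0)=1 .
\end{equation*}
In $[L_n,G_m]$ both sides of the equivalence equation then scale by $\kappa(t)$ (the $G_{n+m}$-term and the $\varphi_1$-central term together), so the $\varphi_1$-coefficient remains equal to $1$; in $[L_n,L_m]$ only the left-hand central term is scaled, giving $\kappa(t)\mu(t)=\lambda t$, hence $\kappa(t)=\lambda t/\mu(t)$, which is an invertible formal series with $\kappa(0)=1$ under the same implicit assumption ($p=1$, $\lambda=\lambda_1$) that the paper's proof of Theorem \ref{M2} uses; a final rescaling removes $\lambda$. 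This $\Phi_t$ is diagonal, hence commutes with $\alpha_t=\big(\sum_{k\geq 0}a_kt^k\big)\alpha_0$, and it is even for the twisted grading of $SHR$ since both $G_m$ and $c$ are odd there. So your diagnosis that ``no single scaling isolates $\varphi_0$'' is only half right: no scaling of the generators alone does, but the simultaneous scaling of the centre and the $G_m$'s does, and this — the exact analogue of the map $\Phi_s(G_m)=\frac{a_{s+1}}{\lambda}G_m$ in the paper's proof of Theorem \ref{M2} — is the step your argument leaves unresolved.
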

\begin{rem}
Theorem \ref{M3}  can be proved in  the same way as Theorem  \ref{M2}.
\end{rem}

\end{document}